\numberwithin{equation}{section}
\theoremstyle{plain}
\newtheorem{theorem}[subsubsection]{Theorem}
\newtheorem{lemma}[subsubsection]{Lemma}
\newtheorem{proposition}[subsubsection]{{Proposition}}
\newtheorem{question}[subsubsection]{{Question}}
\theoremstyle{definition}
\newtheorem{definition}[subsubsection]{{Definition}}
\newtheorem{example}[subsubsection]{{Example}}
\theoremstyle{remark}
\newtheorem{remark}[subsubsection]{{Remark}}
\newtheoremstyle{RepTheorem} %restate theorems with same numbering
\theoremstyle{RepTheorem}
\newtheorem{reptheorem}[subsubsection]{Theorem}
\def \Dri {\textnormal{\textbf{\footnotesize{\cursive{Dri}}}}\hspace{2pt}}
\def\Z {{\mathbb Z}}
\def\F {{\mathbb F}}
\def\Q {{\mathbb Q}}
\def\R {{\mathbb R}}
\def\C {{\mathbb C}}
\def\A {{\mathbb A}}
\def\P {{\mathbb P}}
\def\G {{\mathbb G}}
\def\O {{\mathcal O}}
\def\mcA {{\mathcal A}}
\def\mcB {{\mathcal B}}
\def\mcC {{\mathcal C}}
\def\mcD {{\mathcal D}}
\def\mcF {{\mathcal F}}
\def\mcI {{\mathcal I}}
\def\mcJ {{\mathcal J}}
\def\mcL {{\mathcal L}}
\def\mcP {{\mathcal P}}
\def \msL {{\mathscr L}}
\def \a {{\mathfrak a}}
\def \d {{\mathfrak d}}
\def \p {{\mathfrak p}}
\def \mfD {{\mathfrak D}}
\def \w {{\textnormal{\textbf{w}}}}
\newcommand{\aff}{\textnormal{aff}}
\newcommand{\Div}{\textnormal{div}}
\newcommand{\DIV}{\textnormal{Div}}
\newcommand{\Ht}{\textnormal{ht}}
\newcommand{\Pic}{\textnormal{Pic}}
\newcommand{\proj}{\textnormal{proj}}
\newcommand{\Val}{\textnormal{Val}}
\newcommand{\ord}{\textnormal{ord}}
\renewcommand{\Re}{\textnormal{Re}}
\begin{document}

\title{Counting Drinfeld Modules with Prescribed Local Conditions}

\author{Tristan Phillips}

\email{tristanphillips72@gmail.com}

\subjclass[2010]{Primary 11G09; Secondary 11G18,  11G50, 11D45, 14G05.}

\keywords{Drinfeld modules, weighted projective stacks, Batyrev--Manin conjecture, geometry of numbers, global function fields}

\begin{abstract}
We give asymptotics for the number of isomorphism classes of Drinfeld $\F_q[T]$-modules over $\F_q(T)$ of a given height, which satisfy prescribed sets of local conditions. This is done by relating our problem to a problem about counting points on weighted projective stacks. Our results for counting points of bounded height on weighted projective stacks over global function fields may be of independent interest.
\end{abstract}

\maketitle

\section{Introduction}

Much of the arithmetic of an elliptic curve over a global field is reflected in its reduction types at various primes. It is therefore natural to ask, \emph{How many elliptic curves over a global field satisfy a prescribed set of local conditions?} This question has been recently studied by Cho and Jeong \cite[Theorem 1.4]{CJ23}, who studied the case of elliptic curves over the rational numbers in which finitely many local conditions are imposed (excluding conditions at the primes $2$ and $3$). In \cite{CS23}, Cremona and Sadek studied the case of elliptic curves over the rational numbers in which infinitely many \emph{admissible} local conditions may be imposed (including at the primes $2$ and $3$). In \cite{Phi22a}, the author extended the results of Cho and Jeong to the case of elliptic curves over arbitrary number fields satisfying finitely many local conditions (excluding conditions at primes above $2$ and $3$).  Using a results of the author in \cite{Phi22d}, one can derive results for densities of elliptic curves over arbitrary number fields satisfying infinitely many \emph{admissible} local conditions (again excluding conditions at primes above $2$ and $3$).  
In this article we study the analogous problem for Drinfeld modules:

\begin{question}\label{question:Drinfeld-local}
How many Drinfeld modules satisfy a given set of local conditions?
\end{question}

\subsection{Results for counting Drinfeld modules}

In this paper we will study Question \ref{question:Drinfeld-local} for Drinfeld $\F_q[T]$-modules over $\F_q(T)$. 
The local conditions considered will be good reduction, bad reduction, stable reduction, unstable reduction, stable reduction of rank $s$, and stable reduction of rank $\geq s$ (see Definition \ref{def:reduction-types} for precise definitions). 

For $\a\subseteq \F_q[T]$ an ideal let $N(\a):=\#(\F_q[T]/\a)$ denote the norm of $\a$. For $s\in \C$ with $\Re(s)> 1$ let
\[
\zeta_{\F_q(T)}(s):=\sum_{\a\subseteq \F_q[T]} N(\a)^{-s}
\]
denote the Dedekind zeta function of $\F_q(T)$ at $s$.

We will count Drinfeld modules up to a given height (see Definition \ref{def:Drinfeld-height} for the height we will be using). With respect to this height we will prove the following:

\begin{theorem}\label{thm:Drinfeld-Count}
The number of isomorphism classes of rank $r$ Drinfeld $\F_q[T]$-modules over $\F_q(T)$ with height equal to the positive integer $b$ is
\[
%\#\mcD_r(b)=
\frac{q^{r}}{\zeta_{\F_q(T)}\left(\frac{q^{r+1}-q}{q-1}\right)} q^{b\frac{q^{r+1}-q}{q-1}} +\begin{cases}
 O(q^b b) & \text{ if } r=2,\\
 O\left(q^{b\frac{q^{r+1}-q^2+q-1}{q-1}}\right) & \text{ if } r>2,
\end{cases}
\]
as a function of $b$.
\end{theorem}
 
The next result, which generalizes Theorem \ref{thm:Drinfeld-Count}, is our main result for counting Drinfeld modules when finitely many local conditions are imposed:

\begin{theorem}\label{thm:Drinfeld-finite-local-conditions}
Let $\p_1,\p_2,\dots,\p_m\subseteq \F_q[T]$ be a finite set of distinct prime ideals of $\F_q[T]$. For each $\p_i$ let $\msL_i$ be one of the local conditions in Table \ref{tab:LocalConditions}.  Then the number of isomorphism classes of rank $r$ Drinfeld $\F_q[T]$-modules over $\F_q(T)$ with reduction type $\msL_i$ at $\p_i$ for each $i$ and with height equal to the positive integer $b$ is
\[
\frac{q^{r}}{\zeta_{\F_q(T)}\left(\frac{q^{r+1}-q}{q-1}\right)} 
\left(\prod_{i=1}^m \kappa_{\msL_i}\right) q^{b\frac{q^{r+1}-q}{q-1}} +\begin{cases}
 O(q^b b) & \text{ if } r=2,\\
 O\left(q^{b\frac{q^{r+1}-q^2+q-1}{q-1}}\right) & \text{ if } r>2,
\end{cases}
\]
as a function of $b$, where $\kappa_{\msL}=\kappa'_{\msL}\frac{1}{1-N(\p)^{r-\frac{q^{r+1}-q}{q-1}}}$ with $\kappa'_{\msL}$ given in Table \ref{tab:LocalConditions}.
\end{theorem}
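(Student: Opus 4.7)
The plan is to combine the parameterization behind Theorem \ref{thm:Drinfeld-Count} with a sieve over the finite set of primes $\p_1,\dots,\p_m$. First, I would recall the identification of isomorphism classes of rank $r$ Drinfeld $\F_q[T]$-modules over $\F_q(T)$ with points on the weighted projective space $\P(q-1,q^2-1,\dots,q^r-1)$ over $\F_q(T)$: a Drinfeld module $T\mapsto T+g_1\tau+\dots+g_r\tau^r$ is sent to $[g_1:g_2:\dots:g_r]$, and the Drinfeld height of Definition \ref{def:Drinfeld-height} matches the natural exponential height on this weighted projective space. This is the identification underpinning Theorem \ref{thm:Drinfeld-Count}.

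Second, I would observe that the reduction type of a Drinfeld module at a prime $\p\subseteq\F_q[T]$ depends only on the residue modulo $\p$ of a suitable integral representative of the tuple $(g_1,\dots,g_r)$. Accordingly, each local condition $\mcL_i$ in Table \ref{tab:LocalConditions} corresponds to a union of residue classes modulo $\p_i$ on the affine cone over $\P(q-1,q^2-1,\dots,q^r-1)$. The constant $\kappa_{\mcL_i}$ is then defined to be the local density of this union (appropriately normalized on the cone), and it can be computed by a finite, prime-local analysis of how the Newton-polygon data of the $g_j$ modulo $\p_i$ determines the reduction type.

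Finally, I would apply the weighted-projective counting argument of Theorem \ref{thm:Drinfeld-Count}, now restricted to those tuples whose reductions at each $\p_i$ lie in the residue classes prescribed by $\mcL_i$. Since the primes $\p_i$ are distinct, the Chinese Remainder Theorem renders the local conditions independent, so the count can be assembled one residue class at a time modulo $\prod_i \p_i$; summing produces the factor $\prod_{i=1}^m \kappa_{\mcL_i}$ in the main term, and the error term retains the order stated in Theorem \ref{thm:Drinfeld-Count} because the number of residue classes summed is a constant depending only on the fixed finite set of primes and is absorbed into the implied constant. The principal obstacle is verifying the local density computations for each of the reduction types appearing in Table \ref{tab:LocalConditions}, in particular for the stable-reduction-of-rank-$s$ and unstable-reduction conditions, where one must analyze how scaling the weighted coordinates by $\F_q(T)^\times$ interacts with the reduction; once these densities are established, the passage from Theorem \ref{thm:Drinfeld-Count} to the present statement is essentially formal.
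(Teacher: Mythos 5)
Your proposal follows essentially the same route as the paper. Both use the identification $\overline{\Dri_r}\cong \P(q-1,q^2-1,\dots,q^r-1)$, translate each reduction type at $\p_i$ into a union of residue classes modulo $\p_i$ on the affine cone (after choosing a $\p_i$-minimal integral representative), and then invoke a weighted-projective point count with local conditions to produce the product $\prod_i \kappa_{\mcL_i}$ in the main term. The paper packages the sieve over residue classes into Theorem \ref{thm:WProjFin} (which in turn rests on Proposition \ref{prop:O_F-Finite-Geometric-Sieve} and the Box Lemma), whereas you phrase it as a Chinese-Remainder sum over classes modulo $\prod_i \p_i$ against the argument behind Theorem \ref{thm:Drinfeld-Count}; these are the same argument at different levels of abstraction, and your remark that the implied constant may absorb the dependence on the fixed finite set of primes is correct and matches the paper's conventions. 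The one gap worth flagging is that you defer the computation of the densities $\kappa_{\mcL_i}$, which is the actual content of the paper's Theorem \ref{thm:Drinfeld-single-local-condition}: one must check, after normalizing to the $\p$-minimal model $\min_i\lfloor v_\p(g_i)/(q^i-1)\rfloor=0$, that the proportion of residue vectors $(g_1,\dots,g_r)\bmod\p$ giving stable reduction of rank $s$ is $(N(\p)^s-N(\p)^{s-1})/N(\p)^r$, and then deduce the other entries of Table \ref{tab:LocalConditions} by summing or taking complements. You correctly identify this as the principal obstacle; carrying it out would complete the proof exactly as the paper does.
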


\begin{table}[h]
\centering
\begin{tabular}{cc}
\hline
$\msL$ & $\kappa'_\msL$  \\
\hline
stable of rank $s$ & $\frac{N(\p)^s-N(\p)^{s-1}}{N(\p)^r}$ \\
stable of rank $\geq s$ & $\frac{N(\p)^r-N(\p)^{s-1}}{N(\p)^r}$ \\
good & $\frac{N(\p)-1}{N(\p)}$  \\
bad & $\frac{1}{N(\p)}$ \\
stable & $\frac{N(\p)^r-1}{N(\p)^r}$   \\
unstable & $\frac{1}{N(\p)^r}$ \\
\hline
\end{tabular}
\caption{Local conditions}
\label{tab:LocalConditions}
\end{table}

The next result is our main result for counting Drinfeld modules when infinitely many local conditions are imposed:

\begin{theorem}\label{thm:Drinfeld-Infinite-Local-Conditions}
Let $r$ and $s$ be positive integers with $1\leq s<r$. Let $S=\{\p_1,\dots,\p_m\}$ be a finite set of distinct prime ideals of $\F_q[T]$. For each $\p_i$ let $\msL_i$ be one of the local conditions in Table \ref{tab:LocalConditions}. Then the number isomorphism classes of rank $r$ Drinfeld $\F_q[T]$-modules over $\F_q(T)$ which have stable reduction of rank $\geq s$ everywhere outside of $S$, and reduction type $\msL_i$ at $\p_i$ for each $\p_i\in S$, and of height equal to the positive integer $b$, is asymptotic to
\[
 \frac{\zeta_{\F_q(T)}\left(-r+\frac{q^{r+1}-q}{q-1}\right)}{\zeta_{\F_q(T)}(r-s+1)}\left(\prod_{\p_i\in S} \frac{\kappa'_{\msL_i}}{1-N(\p_i)^{-r+s-1}}\right) \frac{q^{r}}{\zeta_{\F_q(T)}\left(\frac{q^{r+1}-q}{q-1}\right)} q^{b\frac{q^{r+1}-q}{q-1}},
\]
as a function of $b$, where the $\kappa'_{\msL}$ are given in Table \ref{tab:LocalConditions}.
\end{theorem}

\begin{remark}
The factor $\frac{1}{1-N(\p)^{r-\frac{q^{r+1}-q}{q-1}}}$ appearing in the constant $\kappa_{\msL}$ accounts for the difference between counting equations for Drinfeld modules with a given local condition and counting isomorphism classes of Drinfeld modules. This quantity approaches 1 as $N(\p)$, $r$, or $q$ approaches infinity.
\end{remark}

\subsection{Examples}

The following examples illustrate how Theorem \ref{thm:Drinfeld-finite-local-conditions} and Theorem \ref{thm:Drinfeld-Infinite-Local-Conditions} can be used to determine the density of Drinfeld modules satisfying a prescribed set of local conditions.

\begin{example}
Consider the prime ideal $\p=(T^2+T+2)$ of $\F_3[T]$. Then the proportion of rank $r$ Drinfeld $\F_3[T]$-modules over $\F_3(T)$ with good reduction at $\p$ is 
\[
\frac{N(\p)-1}{N(\p)}\left(\frac{1}{1-N(\p)^{r-\frac{q^{r+1}-q}{q-1}}}\right) = \frac{8}{9}\left(\frac{1}{1-9^{r-\frac{3^{r+1}-3}{3-1}}}\right) \approx 88.89\%.
\]
Note that as $r\geq 2$, the term
\[
\frac{1}{1-N(\p)^{r-\frac{3^{r+1}-q}{3-1}}}\leq \frac{1}{1-9^{-10}}=90/89\approx 1.01
\]
has little effect on the density.
\end{example}

\begin{example}
Consider the prime ideal $\p=(T^6+T^4+4T^3+T^2+2)$ of $\F_5[T]$. Then the proportion of rank $4$ Drinfeld $\F_5[T]$-modules over $\F_5(T)$ with unstable reduction at $\p$ is 
\[
\frac{1}{N(\p)^4}\left(\frac{1}{1-N(\p)^{4-\frac{5^{5}-5}{5-1}}}\right) = \frac{1}{5^{24}}\left(\frac{1}{1-5^{-4656}}\right) \approx  0.00\%.
\]
\end{example}

\begin{example}
Consider the prime ideal $\p=(T+2)$ of $\F_7[T]$. Then the proportion of rank $5$ Drinfeld $\F_7[T]$-modules over $\F_7(T)$ with stable reduction of rank $\geq 3$ at $\p$ is 
\[
\frac{N(\p)^5-N(\p)^3}{N(\p)^5} \left(\frac{1}{1-N(\p)^{5-\frac{7^{6}-7}{7-1}}}\right)= \frac{7^5-7^3}{7^{5}}\left(\frac{1}{1-7^{-19602}}\right) \approx  97.96\%.
\]
\end{example}

\begin{example}
Consider the prime ideals $\p_1=(T)$, $\p_2=(T^2+T+2)$, and $\p_3=(T^2+2T+2)$ in $\F_3[T]$. Then the proportion of rank $3$ Drinfeld $\F_3[T]$-modules over $\F_3(T)$ with bad reduction at $\p_1$, stable reduction at $\p_2$, and stable reduction of rank $2$ at $\p_3$ is $\kappa_1\kappa_2\kappa_3$, where
\begin{align*}
\kappa_1&=\frac{1}{N(\p_1)}\left(\frac{1}{1-N(\p_1)^{3-\frac{3^{4}-3}{3-1}}}\right)=\frac{1}{3}\left(\frac{1}{1-3^{-36}}\right)\approx 33.00\%\\
\kappa_2&=\frac{N(\p_2)^3-1}{N(\p_2)^3}\left(\frac{1}{1-N(\p_2)^{3-\frac{3^{4}-3}{3-1}}}\right)=\frac{728}{729}\left(\frac{1}{1-9^{-36}}\right)\approx 99.86\%\\
\kappa_3&=\frac{N(\p_3)^2-N(\p_3)}{N(\p_3)^3}\left(\frac{1}{1-N(\p_2)^{3-\frac{3^{4}-3}{3-1}}}\right)=\frac{8}{81}\left(\frac{1}{1-9^{-36}}\right)\approx 9.89\%.\\
\end{align*}
In particular, we find that the density is
\[
\kappa_1\kappa_2\kappa_3\approx\frac{1}{3}\cdot \frac{728}{729}\cdot \frac{8}{81}
= \frac{5824}{177147}
\approx 3.29\%.
\]
\end{example}

\begin{example}
The proportion of rank $2$ Drinfeld $\F_{8}[T]$-modules over $\F_8(T)$ with everywhere stable reduction is 
\[
\frac{\zeta_{\F_8(T)}\left(-2+\frac{8^{2+1}-8}{8-1}\right)}{\zeta_{\F_8(T)}(2)}=\frac{\zeta_{\F_8(T)}(70)}{\zeta_{\F_8(T)}(2)}=\frac{\left(1-8^{-1}\right)\left(1-8^{-2}\right)}{\left(1-8^{-69}\right)\left(1-8^{-70}\right)}\approx\frac{441}{512} \approx  86.13\%.
\]
\end{example}

\begin{example}
The proportion of rank $11$ Drinfeld $\F_5[T]$-modules over $\F_5(T)$ with everywhere stable reduction of rank $\geq 9$ is 
\[
\frac{\zeta_{\F_5(T)}\left(-11+\frac{5^{11+1}-5}{5-1}\right)}{\zeta_{\F_5(T)}(11-9+1)}
=\frac{\zeta_{\F_5(T)}\left(61035144\right)}{\zeta_{\F_5(T)}(3)}=\frac{\left(1-5^{-2}\right)\left(1-5^{-3}\right)}{\left(1-5^{-61035143}\right)\left(1-5^{-61035144}\right)}\approx\frac{2976}{3125} \approx  95.23\%.
\]
\end{example}

\begin{example}
Consider the prime ideals $\p_1=(T+1)$, $\p_2=(T^2+T+1)$, and $\p_3=(T^3+T+1)$ in $\F_2[T]$. Then the proportion of rank $2$ Drinfeld $\F_2[T]$-modules over $\F_2(T)$ with bad reduction at $\p_1$, good reduction at $\p_2$ and $\p_3$, and stable reduction everywhere else, is 
\begin{align*}
\frac{\zeta_{\F_2(T)}\left(-2+\frac{2^{2+1}-2}{2-1}\right)}{\zeta_{\F_2(T)}(2)}\left(\frac{\kappa_1}{1-N(\p_1)^{-2}}\right)\left(\frac{\kappa_2}{1-N(\p_2)^{-2}}\right)\left(\frac{\kappa_3}{1-N(\p_3)^{-2}}\right),
\end{align*}
where
\begin{align*}
\kappa_1&= \frac{1}{N(\p_1)}=\frac{1}{2}\\
\kappa_2&=\frac{N(\p_2)-1}{N(\p_2)}=\frac{3}{4}\\
\kappa_3&=\frac{N(\p_3)-1}{N(\p_3)}=\frac{7}{8}.
\end{align*}
%\begin{align*}
%\kappa_1&= \frac{1}{N(\p_1)}\left(\frac{1}{1-N(\p_1)^{2-\frac{2^{3}-2}{2-1}}}\right)=\frac{1}{2}\left(\frac{1}{1-2^{-4}}\right)=\frac{8}{15}\\
%\kappa_2&=\frac{N(\p_2)-1}{N(\p_2)}\left(\frac{1}{1-N(\p_2)^{2-\frac{2^{3}-2}{2-1}}}\right)=\frac{3}{4}\left(\frac{1}{1-4^{-4}}\right)=\frac{64}{85}\\
%\kappa_3&=\frac{N(\p_3)-1}{N(\p_3)}\left(\frac{1}{1-N(\p_3)^{2-\frac{2^{2+1}-2}{2-1}}}\right)=\frac{7}{8} \left(\frac{1}{1-8^{-4}}\right)=\frac{512}{585}.
%\end{align*}
This, together with
\[
\frac{\zeta_{\F_2(T)}\left(-2+\frac{2^{2+1}-2}{2-1}\right)}{\zeta_{\F_2(T)}(2)}=\frac{\zeta_{\F_2(T)}\left(4\right)}{\zeta_{\F_2(T)}(2)}=\frac{(1-2^{-1})(1-2^{-2})}{(1-2^{-3})(1-2^{-4})}=\frac{16}{35},
\]
allows us to compute the density,
\begin{align*}
\frac{\zeta_{\F_2(T)}\left(-2+\frac{2^{2+1}-2}{2-1}\right)}{\zeta_{\F_2(T)}(2)}
\left(\frac{1/2}{1-2^{-2}}\right)\left(\frac{3/4}{1-4^{-2}}\right)\left(\frac{7/8}{1-8^{-2}}\right)
 &= \frac{16}{35}\cdot \frac{2}{3}\cdot \frac{4}{5}\cdot \frac{8}{9}
 = \frac{1024}{4725}
  \approx 21.67\%.
\end{align*}
\end{example}

\subsection{Counting points on weighted projective stacks} 

The methods used to prove our results for counting Drinfeld modules are similar to the methods used in \cite{Phi22a} and \cite{Phi22d}. Namely, we count points of bounded height on weighted projective stacks which satisfy prescribed local conditions (see Theorem \ref{thm:WProjFin} and Theorem \ref{thm:WProjInfty}), and then we exploit the fact that the (compactified) moduli stack of rank $r$ Drinfeld modules is a weighted projective stack. In the case of infinitely many local conditions, the geometric sieve is a key ingredient. 

Our results for counting rational points on weighted projective stacks satisfying finitely many or infinitely many local conditions,  Theorem \ref{thm:WProjFin} and Theorem \ref{thm:WProjInfty} respectively, may be of independent interest. They are function field analogs of some of the authors previous results for counting points of bounded height on weighted projective stacks over number fields \cite{Phi22a, Phi22d}. These results for counting points on weighted projective stacks over global function fields which satisfy local conditions, generalize previous results for counting points on projective spaces without local conditions (see work of DiPippo \cite{Dip90}, Wan \cite{Wan91}, and the author \cite{Phi22b}); and also for counting points on weighted projective stacks without local conditions (see \cite{Phi22c}).

%\subsection{Outline}

\subsection{Data availability statement}

Data sharing is not applicable to this article as no datasets were generated or analyzed during the current study.

\subsection{Acknowledgments}
The author would like to thank Jordan Ellenberg for suggesting the Drinfeld module case, Bryden Cais for his guidance and encouragement, and the anonymous referees for pointing out many stylistic improvements. 

\section{Preliminaries}

In this section we will introduce notation related to global function fields and weighted projective stacks which will be used throughout the article.

\subsection{Global function fields}

Let $\mcC$ be a smooth, projective, geometrically connected, genus $g$ curve over the finite field $\F_q$. Let $K=\F_q(\mcC)$ denote the function field of $\mcC$. Let $\infty$ be a fixed closed point of $\mcC$ of degree $d_\infty$ over $\F_q$. Let $\mcA\subset K$ be the ring of functions regular outside of the point $\infty$. 
Let $\Val(K)$ denote the set of discrete valuations of $K$; note these are in bijection with the set of closed points on $\mcC$ (i.e., \textbf{places}). Let $v_\infty$ be the valuation corresponding to the closed point $\infty$, and let $\Val_\infty(K):=\{v_\infty\}$ denote the set of infinite places, and $\Val_0(K)=\Val(K)-\{v_\infty\}$ the set of finite places.

To each place $v\in \Val(K)$ let $K_v$ denote the completion of $K$ with respect to $v$, and define an absolute value on $K_v$ by
\[
|x|_v:=q^{-v(x)\deg(v)}.
\]
Let $\pi_v$ be a uniformizer at the place $v\in \Val(K)$, so that $|\pi_v|_v=q^{-\deg(v)}$.

\subsection{Weighted projective stacks}

Let $\A^n$ denote affine $n$-space, and let $\G_m$ denote the multiplicative group scheme.

\begin{definition}[Weighted projective stack]
Given an $(n+1)$-tuple of positive integers $\w=(w_0,w_1,\dots,w_n)$, the \textbf{weighted projective stack} $\mcP(\w)$ is defined to be the quotient stack
\[
\mcP(\w):=[(\A^{n+1}-\{0\})/\G_m],
\]
with respect to the action
\begin{align*}
\ast_\w\colon\mathbb{G}_m\times(\A^{n+1}-\{0\}) &\to (\A^{n+1}-\{0\})\\
(\lambda,(x_0,\dots,x_n)) &\mapsto \lambda\ast_\w (x_0,\dots,x_n):=(\lambda^{w_0}x_0,\dots, \lambda^{w_n}x_n).
\end{align*}
\end{definition}

As an example, the weighted projective stack with weights $\w=(1,1,\dots,1)$ coincides with projective $n$-space, $\P^n$. Later in this article we will use the fact that the compactified moduli stack of rank $r$ Drinfeld $\F_q[T]$-modules over $\F_q(T)$ is isomorphic to the weighted projective stack $\mcP(q-1,q^2-1,\dots,q^r-1)$.

\begin{remark}
The algebraic stack $\mcP(\w)$ is
 smooth (since it is a quotient stack of a smooth scheme by a smooth group scheme) and
proper (by the valuative criterion for stacks). The point $[(a_0,\dots,a_n)]\in\mcP(w_0,\dots,w_n)$ has stabilizer $\mu_m$ where $m=\gcd(w_i : a_i\neq 0)$. When $K$ is a field of characteristic relatively prime to $m$, we have that $\mu_m$ is finite and reduced over $K$. When $K$ is not relatively to $m$ then $\mu_m$ is not reduced over $K$. It follows that $\mcP(w_0,\dots,w_n)$ is a Deligne-Mumford stack over a field $K$ if and only if the characteristic of $K$ is relatively prime to each of the weights $w_i$. 
\end{remark}

\subsection{Heights on weighted projective stacks}

For each place $v\in \Val(K)$, let $P_v$ denote the corresponding closed point of $\mcC$.
Let $\DIV(\mcC)$ denote the set of divisors on the curve $\mcC$.
For $x$ an element of the function field $K$, let 
\[
\Div(x):=\sum_{v\in \Val(K)} v(x) P_v \in \DIV(\mcC) 
\]
denote the divisor of $x$. For $w\in \Z_{>0}$, define the \textbf{$w$-weighted divisor} of $x$ to be 
\[
\Div_w(x):=\sum_{v\in \Val(K)} \left\lfloor\frac{v(x)}{w}\right\rfloor P_v \in \DIV(\mcC).
\]

Let $\w=(w_0,\dots,w_n)$ be an $(n+1)$-tuple of positive integers.
For $x=[x_0:\cdots:x_n]\in \mcP(\w)(K)$, let $\inf_\w(x)$ denote the greatest divisor $D\in \DIV(\mcC)$ for which $D\leq \Div_{w_i}(x_i)$ for all $i$. 

\begin{definition}[Height]\label{def:WPS-height}
The \textbf{(logarithmic) height} of a point $x=[x_0:\cdots:x_n]\in \mcP(\w)(K)$ is defined as
\[
\Ht_{\w}(x):=-\deg(\inf\nolimits_\w(x))=-\sum_{v\in \Val(K)} \deg(v) \min_i\left\{\left\lfloor\frac{v(x_i)}{w_i}\right\rfloor\right\}.
\]
\end{definition}

\begin{remark}
This height coincides with the \emph{stacky height} associated to the tautological bundle of $\mcP(\w)$ \cite{ESZB23}.
\end{remark}

We may decompose the height into a finite part,
\[
\Ht_{\w,0}(x):=-\sum_{v\in \Val_0(K)} \deg(v) \min_i\left\{\left\lfloor\frac{v(x_i)}{w_i}\right\rfloor\right\},
\]
and an infinite part,
\[
\Ht_{\w,\infty}(x):=-d_\infty \min_i\left\{\left\lfloor\frac{v_\infty(x_i)}{w_i}\right\rfloor\right\},
\]
so that $\Ht_\w(x)=\Ht_{\w,0}(x)+\Ht_{\w,\infty}(x)$.

For $x=[x_0:\cdots:x_n]\in \mcP(\w)(K)$ define the \textbf{finite scaling divisor of $x$} by
\[
\mfD_0(x):=\sum_{v\in \Val_0(K)} \min_i\left\{\left\lfloor\frac{v(x_i)}{w_i}\right\rfloor\right\} P_v \in \DIV(\mcC).
\]
Then $-\deg(\mfD_0(x))=\Ht_{\w,0}(x)$.

We now define weighted versions of linear equivalence and Riemann--Roch spaces.

\begin{definition}[$w$-linearly equivalent divisors]
 Let $w\in\Z_{>0}$. Two divisors $D,D'\in \DIV(\mcC)$ are said to be \textbf{$w$-linearly equivalent} if $D-D'=\Div_w(x)$ for some $x\in \F_q(\mcC)$.
\end{definition} 

%\begin{definition}[$w$-Picard group]
%The \textbf{$w$-Picard group} is defined to be the group of $w$-equivalence classes of divisors on $\mcC$, and is denoted $\Pic_w(\mcC)$.
%\end{definition} 

\begin{definition}[$w$-Riemann--Roch space]
For $D\in \DIV(\mcC)$ we define the \textbf{$w$-Riemann--Roch space} attached to $D$ as
\[
\mcL_w(D):=\{x\in \F_q(\mcC): \Div_w(x)+D\geq 0\} \cup \{0\}.
\]
Set $\ell_w(D):=\dim_{\F_q}(\mcL_w(D))$, the $\F_q$-dimension of the $w$-Riemann--Roch space.
\end{definition} 

When $w=1$ we recover the usual definition of the Riemann--Roch space, and in this case we will simply write $\mcL(D)$ for $\mcL_1(D)$ and $\ell(D)$ for $\ell_1(D)$.

\section{Weighted geometry of numbers over global function fields}

Let $v\in \Val(K)$ be a place of the global function field $K=\F_q(\mcC)$. Recall that $K_v$ is the completion of $K$ at the place $v$. Let $\mcA_v:=\{x\in K_v: v(x)\geq 0\}$ denote the ring of integers of $K_v$. Let $m_v$ denote the Haar measure on $K_v$, normalized so that $m_v(\mcA_v)=1$. This measure naturally extends to a measure on the vector space $K_v^n$, which we also denote by $m_v$. 

 The following result is a weighted version of the Principal of Lipschitz over global function fields, which follows from work of Bhargava, Shankar, and Wang \cite[Proposition 33]{BSW15}.

\begin{proposition}\label{prop:FunctionFieldLipschitz}
Let $R$ be an open compact subset of the vector space $K_\infty^n$. Let $\Lambda$ be a rank $n$ lattice in $K_\infty^n$, and let $m_\Lambda$ be the constant multiple of the measure $m_\infty$ for which $m_\Lambda(K_\infty^n/\Lambda)=1$. Let $\w=(w_1,\dots,w_n)$ be an $n$-tuple of positive integers, and set 
\[
|\w|:=w_1+\cdots+w_n
\]
equal to the sum of weights,
and
\[
w_{\min}:=\min\{w_1,\dots,w_n\}
\]
equal to the minimum weight.
For $t\in K_\infty$ set
\[
t\ast_\w R = \{t\ast_\w x : x\in R\}.
\]
Then
\[
\#\{\left(t\ast_\w R\right) \cap \Lambda\}
= m_\Lambda (R) |t|_\infty^{|\w|}  + O\left(|t|_\infty^{|\w|-w_{\min}}\right)=\frac{m_{\infty}(R)}{m_\infty(K_\infty^n/\Lambda)}|t|_\infty^{|\w|}  + O\left(|t|_\infty^{|\w|-w_{\min}}\right),
\]
as a function of $t$, where the implied constant depends only on $K$, $\infty$, $n$, $R$, $\w$, and $\Lambda$.
\end{proposition}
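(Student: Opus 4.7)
\textit{Proof plan.} The strategy is to reduce the weighted lattice point count to the standard (unweighted) non-archimedean Lipschitz principle of Bhargava, Shankar, and Wang \cite[Proposition 33]{BSW15} via an anisotropic scaling decomposition.

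First, I would use the ultrametric structure of $K_\infty$ to decompose $R$ into finitely many \emph{basic boxes} of the form $\prod_{i=1}^n (c_i + \pi_\infty^{e_i}\mcA_\infty)$. Since such boxes form a basis of topology for $K_\infty^n$ and $R$ is open and compact, we can write $R=\bigsqcup_{j=1}^N R_j$ with each $R_j$ a basic box. Both sides of the claimed estimate are additive over this decomposition, so it suffices to prove the asymptotic for a single basic box $R$.

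Next, I would factor the weighted scaling as $t\ast_\w = t^{w_{\min}}\cdot D_t$ with $D_t=\mathrm{diag}(t^{w_1-w_{\min}},\ldots,t^{w_n-w_{\min}})$, so that $t\ast_\w R = t^{w_{\min}}(D_t R)$ with $D_t R$ again a basic box of measure $|t|_\infty^{|\w|-nw_{\min}}m_\infty(R)$. Applying the unweighted Lipschitz principle with uniform scale factor $t^{w_{\min}}$ to the region $D_t R$ yields the predicted main term
\[
\frac{m_\infty(D_t R)}{m_\infty(K_\infty^n/\Lambda)}\,|t|_\infty^{nw_{\min}} \;=\; \frac{m_\infty(R)}{m_\infty(K_\infty^n/\Lambda)}\,|t|_\infty^{|\w|}.
\]

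The principal obstacle is the error term, because in the factorization above the region $D_tR$ expands anisotropically with $t$, so a naive application of the unweighted principle would produce implicit constants depending on $D_t$ rather than on $R$ alone. To obtain the uniform bound $O(|t|_\infty^{|\w|-w_{\min}})$, one analyzes the boundary contribution directly: the discrepancy between the point count and the volume term is controlled by $\Lambda$-points near codimension-one faces of $t\ast_\w R$, and the face orthogonal to coordinate $i$ has size $|t|_\infty^{|\w|-w_i}$, which is maximized by the $i$ with $w_i=w_{\min}$. In practice this bookkeeping is already present in \cite[Proposition 33]{BSW15}, whose weighted formulation I would invoke directly after matching our normalization of $m_\infty$ and of the absolute value $|\cdot|_\infty$ to theirs.
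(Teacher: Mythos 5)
Your proposal ends at the same place as the paper: both rely on \cite[Proposition 33]{BSW15}, with the main term coming from the measure $|t|_\infty^{|\w|}m_\infty(R)$ of $t\ast_\w R$ and the error controlled by the largest coordinate-hyperplane projection of that region, namely $|t|_\infty^{|\w|-w_{\min}}$, attained by projecting out a minimal-weight coordinate. The box decomposition and the factoring $t\ast_\w = t^{w_{\min}}D_t$ are unnecessary scaffolding --- the paper simply applies the BSW estimate directly to the region $t\ast_\w R$ --- and, as you yourself observe, that detour reintroduces a $t$-dependence in the implied constant which must in any case be cured by reverting to BSW's projection-form error bound.
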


\begin{proof}
Since $m_\infty(t\ast_\w R)=|t|_\infty^{|\w|} m_\infty(R)$, \cite[Proposition 33]{BSW15} gives the desired leading term. Let $m_\infty(\proj(t\ast_\w R))$ denote the greatest $d$-dimensional volume obtained from the orthogonal projection of the subset $t\ast_\w R$ onto any $d$-dimensional coordinate subspace, for any $d<n$. Then the error term given in \cite[Proposition 33]{BSW15} is $O(m_\infty(\proj(t\ast_\w R)))$. In our case this equals $O\left(|t|_\infty^{|\w|-w_{\min}}\right)$, since the largest orthogonal projection is obtained by projecting onto any of the $(n-1)$-dimensional subspaces obtained by setting a coordinate with minimal weight, $w_{\min}:=\min_i\{w_i\}$, equal to zero.
\end{proof}

\begin{definition}[Boxes]
An \textbf{$(\mcA_v^n)$-box} is a subset $\mcB_v\subset \mcA_v^n$ for which there exist closed balls 
\[
\mcB_{v,j}=\{x\in \O_v : |x-a|_v \leq b_{v,j}\}\subset \mcA_v,
\]
 where $a\in \mcA_v$ and $b_{v,j}\in \{q_v^k : k\in \Z\}$, such that $\mcB_v$ is equal to the Cartesian product $\prod_{j=1}^n \mcB_{v,j}$. For $S\subset \Val_0(K)$ a finite set of finite places, an \textbf{$S$-box} is a subset $\mcB\subset \prod_{v\in S} \mcA_v^n$ for which there exists $\mcA_v^n$-boxes $\mcB_{v,j}$ such that
\[
\mcB=\prod_{v\in S} \mcB_v=\prod_{v\in S} \prod_{j=1}^n \mcB_{v,j}.
\]
\end{definition}

\begin{lemma}[Box Lemma]\label{lem:box}
Let $\Omega_{\infty}$ be an open compact subset of $K_{\infty}^n$, let $S\subset \Val_0(K)$ be a finite set of finite places, and let $\mcB=
\prod_{v\in S} \mcB_v$ be an $S$-box. Then, for each $t\in K_{\infty}$, we have
\begin{align*}
\#\left\{x\in \mcA^n\cap \left(t\ast_{\w} \Omega_{\infty}\right) : x\in \mcB \right\}
= \left( m_{\infty}(\Omega_\infty)\prod_{v\in S} m_{v}(\mcB_v)\right) |t|_\infty^{|\w|}+O\left(|t|_\infty^{|\w|-w_{\min}}\right),
\end{align*}
where the implied constant depends only on $K_{\infty}$, $n$, $\Omega_\infty$, and $\w$.
\end{lemma}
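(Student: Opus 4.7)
The plan is to convert the local conditions at $S$ into a coset-of-sublattice condition inside $\mcA^n$, and then apply Proposition~\ref{prop:FunctionFieldLipschitz} to that sublattice.

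First, each ball $\mcB_{v,j}$ can be written as $a_{v,j}+\pi_v^{e_{v,j}}\mcA_v$ for some integer $e_{v,j}\ge 0$ (if $e_{v,j}<0$ then $\mcB_{v,j}=\mcA_v$ and imposes no condition on elements of $\mcA$, so the factor drops out on both sides). Thus the constraint $x\in \mcB$ translates to a system of congruences $x_j\equiv a_{v,j}\pmod{\pi_v^{e_{v,j}}}$ for all $v\in S$ and all $j$. By the Chinese Remainder Theorem for $\mcA$, the congruences at fixed $j$ combine into a single condition $x_j\in a_j+\mfI_j$, where $\mfI_j := \prod_{v\in S}\p_v^{e_{v,j}}$ is an integral ideal of $\mcA$ of index $\prod_{v\in S}m_v(\mcB_{v,j})^{-1}$ and $a_j\in \mcA$. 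Setting $\Lambda := \mfI_1\times\cdots\times\mfI_n\subset K_\infty^n$ and $a := (a_1,\ldots,a_n)\in \mcA^n$, the quantity to estimate becomes $\#\bigl((a+\Lambda)\cap t\ast_\w\Omega_\infty\bigr)$.

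Next I would eliminate the coset shift. Coordinate-wise scaling commutes with translation in the second variable, so $t\ast_\w\Omega_\infty - a = t\ast_\w(\Omega_\infty - t^{-1}\ast_\w a)$, and the count equals $\#\bigl(\Lambda\cap t\ast_\w(\Omega_\infty - t^{-1}\ast_\w a)\bigr)$. Since $\Omega_\infty$ is open and compact in the non-archimedean space $K_\infty^n$, it is a finite disjoint union of open balls of some common positive radius $\rho$. For $|t|_\infty$ sufficiently large (depending only on $\rho$, $a$, and $\w$), each coordinate of $t^{-1}\ast_\w a$ has absolute value less than $\rho$, so $\Omega_\infty - t^{-1}\ast_\w a = \Omega_\infty$ as subsets of $K_\infty^n$; for the finitely many smaller values of $|t|_\infty$ the count is trivially bounded by a constant and absorbed into the error term.

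I would then apply Proposition~\ref{prop:FunctionFieldLipschitz} to the lattice $\Lambda$ and the region $\Omega_\infty$ to obtain
\[
\#\bigl(\Lambda\cap t\ast_\w\Omega_\infty\bigr) = \frac{m_\infty(\Omega_\infty)}{m_\infty(K_\infty^n/\Lambda)}|t|_\infty^{|\w|} + O\bigl(|t|_\infty^{|\w|-w_{\min}}\bigr),
\]
and conclude by computing the covolume: the identity $[\mcA^n:\Lambda] = \prod_{v\in S}m_v(\mcB_v)^{-1}$ gives $m_\infty(K_\infty^n/\Lambda) = m_\infty(K_\infty^n/\mcA^n)\cdot\prod_{v\in S} m_v(\mcB_v)^{-1}$, and substituting produces the claimed leading term once the chosen Haar normalization is used to identify $m_\infty(K_\infty^n/\mcA^n)=1$. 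The step I expect to require the most care is this final book-keeping of Haar-measure normalizations linking the local measures $m_v(\mcB_v)$ to the global covolume of $\Lambda$; the non-archimedean translation trick, by contrast, is a convenient device to remove the coset shift and introduces no genuine difficulty.
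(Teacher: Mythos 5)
Your proof takes essentially the same route as the paper's: recognize $\mcA^n\cap\mcB$ as a coset of a sublattice $\Lambda\subset\mcA^n$ of index $\prod_{v\in S}m_v(\mcB_v)^{-1}$, and then apply Proposition~\ref{prop:FunctionFieldLipschitz}. The paper's own proof is a two-sentence sketch, and your version fills in useful detail: the CRT step, and in particular the handling of the coset shift, which the paper leaves implicit even though Proposition~\ref{prop:FunctionFieldLipschitz} is stated for lattices rather than cosets. The non-archimedean translation trick (covering $\Omega_\infty$ by finitely many balls of a common radius $\rho$, which are stable under translation by any vector with all coordinates of norm $<\rho$) is a valid and clean way to remove the shift; just note that "$|t|_\infty$ sufficiently large'' there depends on $a$ and hence on $\mcB$, which is harmless for an asymptotic in $t$ but sits slightly awkwardly against the lemma's claim that the implied constant is independent of $\mcB$ (a uniformity the paper's one-line proof also does not obviously deliver, since the implied constant of Proposition~\ref{prop:FunctionFieldLipschitz} depends on $\Lambda$).

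One concrete point to fix: you close by saying "the chosen Haar normalization is used to identify $m_\infty(K_\infty^n/\mcA^n)=1$,'' but that is not what the paper's normalization gives. The measure $m_\infty$ is normalized so that $m_\infty(\mcA_\infty)=1$, where $\mcA_\infty=\{x\in K_\infty:v_\infty(x)\geq 0\}$ is the local ring of integers, not so that $\mcA$ has covolume~$1$; the paper itself computes $m_\infty(K_\infty^n/\mcA^n)=q^{n(g-1)}$, which equals $1$ only when $g=1$. So Proposition~\ref{prop:FunctionFieldLipschitz} applied to your $\Lambda$ actually gives leading coefficient $m_\infty(\Omega_\infty)\prod_{v\in S}m_v(\mcB_v)\big/q^{n(g-1)}$. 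The Box Lemma as printed appears to omit this $q^{n(g-1)}$ denominator --- notice that Lemma~\ref{lem:O_F-Finite-Geometric-Sieve}, derived from it, explicitly carries the factor $m_\infty(\Lambda_\infty/\Lambda)$ --- so either the lemma statement has a typo or a different normalization was intended there. Your derivation is sound up to that point; the erroneous step is the assertion that $m_\infty(K_\infty^n/\mcA^n)=1$, which you should replace by the explicit value $q^{n(g-1)}$ and then note the resulting (apparent) mismatch with the printed statement.
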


\begin{proof}
Notice that the intersection $\mcA^n \cap \mcB$, of the lattice $\mcA^n$ and the box $\mcB$, is a translate of a sub-lattice of $\mcA^n$ with determinant $m_\infty(K_\infty^n/(\mcA^n\cap\mcB))=\prod_{v\in S} m_\infty(\mcB_v)^{-1}$. Applying Proposition \ref{prop:FunctionFieldLipschitz} to this lattice proves the lemma.
\end{proof}

For $T$ a topological space and $S\subset T$ a subspace, let $\partial S$ denote the boundary of $S$.

\begin{lemma}\label{lem:O_F-Finite-Geometric-Sieve}
 Let $\Lambda$ be a rank $n$ $\mcA$-lattice in the vector space $K_\infty^n$. Set $\Lambda_\infty:=\Lambda\otimes_{\mcA}K_{\infty}$ and for any place $v\in \Val(K)$ set $\Lambda_v:=\Lambda\otimes_{\mcA} \mcA_v$. Let $\Omega_\infty\subset K_{\infty}^n$ be an open compact subset. For each place $v$ in a finite subset $S$ of $\Val_0(K)$, let $\Omega_v\subset \Lambda_v$ be a subset whose boundary has measure $m_v(\partial \Omega_v)=0$. Let $\w=(w_1,\dots,w_n)$ be an $n$-tuple of positive integers. Then
\[
\#\{x\in \Lambda\cap t\ast_\w \Omega_\infty: x\in \Omega_v \text{ for all } v\in S\}=\left(\frac{m_{\infty}(\Omega_\infty)}{m_{\infty}(\Lambda_{\infty}/\Lambda)} \prod_{v\in S} \frac{m_v(\Omega_v)}{m_v(\Lambda_v)}\right) |t|_\infty^{|\w|}+O\left(|t|_\infty^{|\w|-w_{\min}}\right).
\]
\end{lemma}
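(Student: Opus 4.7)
The strategy is to reduce the count to the Box Lemma (Lemma~\ref{lem:box}) by sandwiching each $\Omega_v$ between finite disjoint unions of $\mcA_v^n$-boxes. Fix $\epsilon>0$. For each $v\in S$, since $m_v(\partial\Omega_v)=0$ and $\mcA_v^n$ is a compact ultrametric space, I would exhaust $\mcA_v^n$ by cosets of $\pi_v^{N}\mcA_v^n$ and classify each coset as entirely inside $\Omega_v$, entirely outside, or meeting $\partial\Omega_v$. The $m_v$-mass of the boundary cosets tends to $m_v(\partial\Omega_v)=0$ as $N\to\infty$, so one obtains finite disjoint unions of $\mcA_v^n$-boxes $U_v^-\subseteq\Omega_v\subseteq U_v^+$ with $m_v(U_v^+\setminus U_v^-)<\epsilon$.

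The products $\prod_{v\in S}U_v^\pm$ are then finite disjoint unions of $S$-boxes which sandwich $\prod_{v\in S}\Omega_v$. Applying Lemma~\ref{lem:box} to each such $S$-box and summing gives
\[
\#\{x\in\Lambda\cap t\ast_\w\Omega_\infty : x\in U_v^\pm \text{ for all } v\in S\}=\left(\frac{m_\infty(\Omega_\infty)}{m_\infty(\Lambda_\infty/\Lambda)}\prod_{v\in S}\frac{m_v(U_v^\pm)}{m_v(\Lambda_v)}\right)|t|_\infty^{|\w|}+O_\epsilon\!\left(|t|_\infty^{|\w|-w_{\min}}\right).
\]
Since $\prod_{v\in S}m_v(U_v^\pm)$ differs from $\prod_{v\in S}m_v(\Omega_v)$ by $O(\epsilon)$, the sandwich yields the claimed main term up to a total error of $O(\epsilon\,|t|_\infty^{|\w|})+O_\epsilon(|t|_\infty^{|\w|-w_{\min}})$.

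The main obstacle is that the implicit constant in the $O_\epsilon$ term grows with the number of approximating boxes, which diverges as $\epsilon\to 0$. To upgrade the error to the stated $O(|t|_\infty^{|\w|-w_{\min}})$, I would allow $\epsilon$ to depend on $|t|_\infty$, choosing the refinement $N=N(t)$ so that $\epsilon\asymp |t|_\infty^{-w_{\min}}$ and balancing against the growth of the number of boxes. The key point here is that the implicit constant in Lemma~\ref{lem:box} is uniform in the choice of box (depending only on $K_\infty$, $n$, $\Omega_\infty$, and $\w$), so the aggregate error scales linearly in the number of boxes used; quantifying the rate at which $m_v(\partial_N\Omega_v)\to 0$ via the zero-measure boundary hypothesis then completes the bound.
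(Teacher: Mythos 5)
Your approach is essentially the paper's: reduce to $\Lambda=\mcA^n$, sandwich $\prod_{v\in S}\Omega_v$ between finite disjoint unions of boxes, apply the Box Lemma to each and sum. You have, however, correctly flagged the one delicate point that the paper's own proof handles only with the phrase ``Together these bounds imply the desired asymptotic'': as the approximation accuracy $\epsilon\to 0$ the number of approximating boxes, and hence the implied constant in the aggregate $O_\epsilon\bigl(|t|_\infty^{|\w|-w_{\min}}\bigr)$, grows without bound, so for each fixed $\epsilon$ one obtains only the $\sim$-asymptotic (main term plus $o(|t|_\infty^{|\w|})$), not the stated power-saving secondary term. Your proposed remedy --- letting $\epsilon$, equivalently the refinement level $N$, depend on $|t|_\infty$ --- is the right instinct, but it does not close the gap as written: the hypothesis $m_v(\partial\Omega_v)=0$ furnishes no rate at which the mass of the level-$N$ boundary cells decays, so there is no way to choose $N=N(t)$ uniformly for a general measure-zero-boundary $\Omega_v$. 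The error term as stated is really only available when each $\Omega_v$ is a finite disjoint union of $\mcA_v^n$-boxes (equivalently, clopen), in which case the decomposition is exact and the error comes from a fixed, finite number of applications of the Box Lemma. This holds in every application of the lemma in the paper, since all local conditions used arise from finite-level congruence conditions; but as a stand-alone statement the lemma should either restrict $\Omega_v$ to clopen sets, allow the implied constant to depend on a quantitative modulus for $m_v(\partial\Omega_v)$, or weaken the conclusion to $\sim$ in the general case.
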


\begin{proof}
Fix an isomorphism $\Lambda\cong \mcA^n$. Let $m_{\Lambda_\infty}$ and $m_{\Lambda_v}$ denote the Haar measures on $\Lambda_\infty$ and $\Lambda_v$ respectfully, normalized so that $m_{\Lambda_\infty}(\Lambda_\infty)=1$ and $m_{\Lambda_v}(\Lambda_\infty)=1$. These measures induce measures on $K_{\infty}^n$ and $\mcA_v^{n}$, which differ from the usual Haar measures by $m_\infty(\Lambda_\infty/\Lambda)$ and $m_v(\Lambda_v)$ respectfully. It therefore suffices to prove the result in the case that $\Lambda=\mcA^n$. Note that if $\Lambda=\mcA^n$ and each $\Omega_v$ is an $\mcA_v$-box, then Lemma \ref{lem:O_F-Finite-Geometric-Sieve} is precisely the Box Lemma (Lemma \ref{lem:box}). However, if any of the $\Omega_v$ are not $\mcA_v$-boxes, then there is more work to be done.

Set $P:=\prod_{v\in S}\Omega_v$, and let
\[
Q:=\bigcup_{v\in S} \left((\mcA_v^n-\Omega_v)\prod_{u\in S-\{v\}} \mcA_u^n\right)
\]
 denote the complement of $P$ in $\prod_{v\in S}\mcA_v^n$. By assumption, the set $\partial \Omega_v=\partial (\mcA_v^n-\Omega_v)$ has measure zero. Therefore, by compactness, we may cover the  closure of $P$ by a finite set of boxes, $(\mcI^{(i)})_{i\in I}$, such that the sum of their measures is arbitrarily close to $\prod_{v\in S} m_\infty(\Omega_\infty)$. Similarly, we may cover the closure of $Q$ by a finite collection of boxes, $(\mcJ^{(j)})_{j\in J}$, such that the sum of their measures is arbitrarily close to $1-\prod_{v\in S}m_v(\Omega_v)$.

Applying the Box Lemma (Lemma \ref{lem:box}) to the boxes $(\mcI^{(i)})_{i\in I}$, and then summing over the boxes, gives an upper bound for the size of the set 
\begin{align}\label{eq:size_set}
\{x\in \Lambda\cap \left(t\ast_\w \Omega_\infty\right): x\in \Omega_v \text{ for all } v\in S\}.
\end{align}
Similarly, applying the Box Lemma to the boxes $(\mcJ^{(j)})_{j\in J}$, and then summing over the boxes, gives an upper bound for the size of the complement of the set (\ref{eq:size_set}). Together these bounds imply the desired asymptotic.
\end{proof}

For $d\in \Z_{\geq 0}$ a non-negative integer, let $\Val_d(K):=\{v\in \Val(K): \deg(v)\geq d\}$ denote the set of places of degree greater than or equal to $d$. Recall that $\pi_\infty$ is a uniformizer for the completion $K_\infty$ of the function field $K=\F_q(\mcC)$ at the closed point $\infty$. For real valued function $f(x)$ and $g(x)$, we write $f\sim g$ if $\lim_{n\to\infty} f(n)/g(n)=1$.

\begin{lemma}\label{lem:O_F-Infinite-Geometric-Sieve}
Let $\w=(w_1,\dots,w_n)\in \Z_{> 0}^n$ be an $n$-tuple of positive integers, let $\Lambda$ be an $\mcA$-lattice of rank $n$ in the vector space $K_\infty^n$, and set $\Lambda_\infty:=\Lambda\otimes_{\mcA} K_\infty$. Let $\Omega_\infty\subset \Lambda_\infty$ be an open compact subset whose boundary has measure $m_\infty(\partial \Omega_\infty)=0$. For each place $v\in \Val_0(K)$ set $\Lambda_v:=\Lambda\otimes_{\mcA}\mcA_v$, and let $\Omega_v\subset \Lambda_v$ be a subset whose boundary has measure $m_v(\partial\Omega_v)=0$. Assume that
\begin{equation}\label{eq:O_F-Infinite-Geometric-Sieve}
\lim_{d\to\infty} \limsup_{b \to \infty}\frac{\#\{ x\in \Lambda \cap \left(\pi_\infty^{-b} \ast_{\w} \Omega_\infty\right) : x\not\in \Omega_v \text{ for some } v\in \Val_d(K)\}}{q^{b|\w|}}=0.
\end{equation}
Then
\[
\#\{ x\in \Lambda \cap \left(\pi_\infty^{-b} \ast_\w \Omega_\infty\right) : x\in \Omega_v \text{ for all } v\in \Val_0(K)\} \sim \frac{m_\infty(\Omega_\infty)}{m_\infty(\Lambda_\infty/\Lambda)}\left(\prod_{v\in \Val_0(K)} \frac{m_v(\Omega_v)}{m_v(\Lambda_v)}\right) q^{b|\w|}.
\]

\end{lemma}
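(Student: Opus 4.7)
The plan is to upgrade the finite geometric sieve (Lemma \ref{lem:O_F-Finite-Geometric-Sieve}) to infinitely many places by truncation. For each integer $d\geq 1$, set $S_d:=\Val_0(K)\setminus \Val_d(K)=\{v\in\Val_0(K):\deg(v)<d\}$, a finite set of finite places. Write
\[
N_d(b):=\#\{x\in\Lambda\cap\pi_\infty^{-b}\ast_\w \Omega_\infty : x\in \Omega_v \text{ for all } v\in S_d\},
\]
\[
N_\infty(b):=\#\{x\in\Lambda\cap\pi_\infty^{-b}\ast_\w \Omega_\infty : x\in \Omega_v \text{ for all } v\in \Val_0(K)\},
\]
and
\[
C_d:=\frac{m_\infty(\Omega_\infty)}{m_\infty(\Lambda_\infty/\Lambda)}\prod_{v\in S_d}\frac{m_v(\Omega_v)}{m_v(\Lambda_v)}.
\]
Lemma \ref{lem:O_F-Finite-Geometric-Sieve} applied to $S_d$ yields, for every fixed $d$,
\[
\lim_{b\to\infty} \frac{N_d(b)}{q^{b|\w|}} = C_d.
\]
Since each factor $m_v(\Omega_v)/m_v(\Lambda_v)\in[0,1]$, the sequence $(C_d)_d$ is nonincreasing, hence convergent; call its limit $C$, which is precisely the right-hand side of the asymptotic to be proved. (The tacit nondegeneracy hypothesis $C>0$ is what makes the $\sim$ meaningful; otherwise the argument still shows $N_\infty(b)=o(q^{b|\w|})$.)

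Next I would compare $N_\infty(b)$ with $N_d(b)$. Since imposing more local conditions can only decrease the count, $N_\infty(b)\leq N_d(b)$, and the deficit is controlled by
\[
N_d(b)-N_\infty(b) \leq \#\bigl\{x\in\Lambda\cap\pi_\infty^{-b}\ast_\w \Omega_\infty : x\notin \Omega_v \text{ for some } v\in\Val_d(K)\bigr\}.
\]
By the standing hypothesis (\ref{eq:O_F-Infinite-Geometric-Sieve}), the $\limsup_b$ of the right-hand side divided by $q^{b|\w|}$ tends to $0$ as $d\to\infty$.

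Combining these ingredients is then an $\varepsilon$-argument. Given $\varepsilon>0$, choose $d$ so large that $|C_d-C|<\varepsilon$ and $\limsup_{b\to\infty}(N_d(b)-N_\infty(b))/q^{b|\w|}<\varepsilon$. Then
\[
\limsup_{b\to\infty}\frac{N_\infty(b)}{q^{b|\w|}} \leq \lim_{b\to\infty}\frac{N_d(b)}{q^{b|\w|}}=C_d\leq C+\varepsilon,
\]
while
\[
\liminf_{b\to\infty}\frac{N_\infty(b)}{q^{b|\w|}} \geq \lim_{b\to\infty}\frac{N_d(b)}{q^{b|\w|}} - \varepsilon = C_d-\varepsilon \geq C-2\varepsilon.
\]
Letting $\varepsilon\to 0$ yields $N_\infty(b)/q^{b|\w|}\to C$, which is exactly the asserted asymptotic.

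The only genuine obstacle here is the interchange of the limits in $b$ (volume count) and $d$ (truncation of the product), and this is precisely the content that the hypothesis (\ref{eq:O_F-Infinite-Geometric-Sieve}) is tailored to supply; the rest is bookkeeping. In applications one typically verifies (\ref{eq:O_F-Infinite-Geometric-Sieve}) by a geometric sieve à la Ekedahl/Bhargava–Shankar–Wang, bounding the density of lattice points lying on a higher-codimension subvariety modulo $v$ uniformly in $v\in\Val_d(K)$ and summing geometrically in $\deg(v)$; that verification is done in subsequent sections rather than here.
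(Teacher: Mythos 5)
Your proof is correct and follows essentially the same route as the paper: truncate the set of local conditions to the finite sets $S_d$, apply Lemma \ref{lem:O_F-Finite-Geometric-Sieve}, use hypothesis (\ref{eq:O_F-Infinite-Geometric-Sieve}) to control the deficit at places of degree $\geq d$, and conclude by an $\varepsilon$-argument interchanging the limits in $b$ and $d$. Your use of monotonicity to get convergence of the partial products $C_d$ (and your remark about the degenerate case $C=0$) is a modest simplification of the paper's appeal to Cauchy's criterion, but the structure of the argument is the same.
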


\begin{proof}
As in the proof of Lemma \ref{lem:O_F-Finite-Geometric-Sieve}, we may reduce to the case in which $\Lambda=\mcA^n$.

For $d\leq d'\leq \infty$ and $b>0$, define the function
\[
f_{d,d'}(b):=\frac{\#\{x\in \mcA^n \cap \left(\pi_\infty^{-b} \ast_\w \Omega_\infty\right) : x\in \Omega_v \text{ for all } v\in \Val_d(K)-\Val_{d'}(K)\}}{q^{b|\w|}},
\]
and set $f_d(b):=f_{1,d}(b)$. Note that for all $s\in \Z_{\geq 0}\cup \{\infty\}$ we have  $f_{d}(b)\geq f_{d+s}(b)$. The hypothesis (\ref{eq:O_F-Infinite-Geometric-Sieve}) implies that
\begin{align}\label{eq:1-pf-O_F-infinite-geometric-sieve}
\lim_{d\to\infty}\limsup_{b \to\infty} \left(f_d(b)-f_\infty(b)\right)=0.
\end{align}
For all $d<d'<\infty$ we have, by Lemma \ref{lem:O_F-Finite-Geometric-Sieve}, that
\begin{align}\label{eq:2-pf-O_F-infinite-geometric-sieve}
\lim_{b\to\infty} f_{d,d'}(b)=m_\infty(\Omega_\infty)\prod_{v\in \Val_{d}(K)-\Val_{d'}(K)} m_v(\Omega_v).
\end{align}
Combining the limits (\ref{eq:1-pf-O_F-infinite-geometric-sieve}) and (\ref{eq:2-pf-O_F-infinite-geometric-sieve}) gives
\[
\lim_{b\to\infty} f_{\infty}(b)=\lim_{d\to\infty}\lim_{b\to\infty} f_d(b)=m_\infty(\Omega_\infty)\prod_{v\in \Val_d(K)} m_v(\Omega_v).
\]
Note that the infinite product converges by Cauchy's criterion, since the limits (\ref{eq:O_F-Infinite-Geometric-Sieve}) and (\ref{eq:2-pf-O_F-infinite-geometric-sieve}) together imply that
\[
\lim_{d\to\infty} \sup_{s\geq 1} \left| 1-\prod_{v\in \Val_d(K)-\Val_{d+s}(K)} m_v(\Omega_v)\right| =\frac{1}{m_\infty(\Omega_\infty)}\lim_{d\to\infty} \sup_{s\geq 1} \lim_{b\to\infty} |f_1(b)-f_{d, d+s}(b)|=0.
\]
\end{proof}

As an important example, note that $\mcA^n$ may naturally be viewed as a lattice in $K_\infty^n$. Following Weil \cite[2.1.3 (b)]{Wei82}, we compute the covolume $m_\infty(K_\infty^n/\mcA^n)$. First, note that $m_\infty(K_\infty^n/\mcA^n)=m_\infty(K_\infty/\mcA)^n$. Since the uniformizer $\pi_\infty$ has a zero at $\infty$, it must have a pole somewhere in $\mcC-\infty$, and thus $\mcA\cap (\pi_\infty)=\{0\}$. Therefore the quotient $K_\infty/\mcA$ has a set of representatives consisting of cosets of $(\pi_\infty)$; more explicitly,
\[
K_\infty/\mcA=\bigsqcup_{F\in \{f\in K\ :\ \Div(f)\geq \infty\}} F \cdot (\pi_\infty).
\]
 By Riemann-Roch, $\#\{f\in K\ :\ \Div(f)\geq \infty\}=q^{d_\infty+g-1}$. As $m_\infty(\pi_\infty\mcA)= q^{-d_\infty}$, it follows that
 \[
 m_\infty(K_\infty^n/\mcA^n)=q^{n(g-1)}.
 \]
More generally, the above argument can be modified to show that, if $\a\subseteq \mcA$ is an integral ideal of $\mcA$, then the lattice
\[
\Lambda_\a:=\a^{w_1}\times\cdots\times \a^{w_n}\subset K_\infty^n
\]
has covolume
\[
m_\infty(K_\infty^n/\Lambda_\a)=N(\a)^{|\w|} q^{\#\w(g-1)},
\]
where $N(\a)=\#(\mcA/\a)$ is the norm of $\a$. By Proposition \ref{prop:FunctionFieldLipschitz} we obtain the following result:

\begin{proposition}\label{prop:FunctionFieldIdealLipschitz}
Let $R$ be an open compact subset of $K_\infty^n$, let $\a\subset \mcA$ be an integral ideal, and let $\Lambda_\a$ denote the rank $n$ lattice $\a^{w_1}\times\cdots\times \a^{w_n}$ in $K_\infty^n$.
Then
\[
\#\{(t\ast_\w R) \cap \Lambda_\a\}
=\frac{m_{\infty}(R)}{N(\a)^{|\w|} q^{\#\w(g-1)}}|t|_\infty^{|\w|}  + O\left(|t|_\infty^{|\w|-w_{\min}}\right) ,
\]
as a function of $t$, where the implied constant depends only on $K$, $\infty$, $n$, $R$, $\w$, and $\Lambda$.
\end{proposition}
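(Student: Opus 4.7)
The plan is to apply Proposition \ref{prop:FunctionFieldLipschitz} directly to the lattice $\Lambda = \Lambda_\a$ and insert the covolume that was just computed in the preceding paragraph. First I would check that $\Lambda_\a$ really is a rank $n$ $\mcA$-lattice in $K_\infty^n$: each factor $\a^{w_i}$ is a nonzero integral ideal of $\mcA$, hence a rank one $\mcA$-lattice in $K_\infty$ via the standard embedding, so the Cartesian product $\a^{w_1}\times\cdots\times\a^{w_n}$ is a rank $n$ lattice in $K_\infty^n$, as required by the hypothesis of Proposition \ref{prop:FunctionFieldLipschitz}.

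Next I would invoke the covolume identity
\[
m_\infty(K_\infty^n/\Lambda_\a) = N(\a)^{|\w|} q^{|\w|(g-1)},
\]
which has just been obtained in the discussion preceding the statement, via the Weil-style Riemann--Roch argument on $\mcC$ (first handling the case $\a=\mcA$ by writing $K_\infty/\mcA$ as a disjoint union of cosets of $(\pi_\infty)$ indexed by $\{f\in K:\Div(f)\geq \infty\}$, then modifying the argument for a general ideal $\a$ by scaling each factor). Feeding this into the conclusion of Proposition \ref{prop:FunctionFieldLipschitz}, whose leading coefficient is $m_\infty(R)/m_\infty(K_\infty^n/\Lambda)$, immediately yields the claimed leading term $m_\infty(R)/\bigl(N(\a)^{|\w|}q^{|\w|(g-1)}\bigr)\cdot |t|_\infty^{|\w|}$, with the error $O(|t|_\infty^{|\w|-w_{\min}})$ transported unchanged and the dependence of the implied constant inherited verbatim.

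Because both building blocks --- the general weighted Lipschitz principle of Proposition \ref{prop:FunctionFieldLipschitz} and the specific covolume calculation --- are already in place, I do not anticipate a genuine obstacle; the only substantive point one has to keep track of is that the covolume scales multiplicatively with $N(\a)$ in each coordinate with exponent $w_i$, giving the product exponent $|\w|=\sum_i w_i$ rather than $n$. The proposition is thus a direct specialization of Proposition \ref{prop:FunctionFieldLipschitz} to the family of lattices that will arise when parametrizing Drinfeld modules and points of weighted projective spaces over $K$ by ideal classes.
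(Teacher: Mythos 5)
Your proposal matches the paper's argument exactly: the proposition is stated as an immediate consequence of Proposition \ref{prop:FunctionFieldLipschitz} combined with the covolume computation $m_\infty(K_\infty^n/\Lambda_\a)=N(\a)^{|\w|}q^{|\w|(g-1)}$ carried out in the preceding paragraph, and your proof is a correct spelling-out of that specialization.
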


By Lemma \ref{lem:O_F-Finite-Geometric-Sieve}, we have the following proposition:

\begin{proposition}\label{prop:O_F-Finite-Geometric-Sieve}
Let $\Omega_\infty\subset K_\infty^n$ be an open compact subset, and let $\a\subset \mcA$ be an integral ideal. For each place $v$ in a finite subset $S$ of $\Val_0(K)$, let $\Omega_v\subset \mcA_v^n$ be a subset whose boundary has measure $m_v(\partial \Omega_v)=0$.
Let $\w$ be an $n$-tuple of positive integers.
Then
\begin{align*}
&\#\{x\in \Lambda_\a \cap (t \ast_\w \Omega_\infty) : x\in \Omega_v \text{ for all places } v\in S\}\\
& \hspace{1cm} = \frac{m_\infty(\Omega_\infty)}{N(\a)^{|\w|} q^{\#\w(g-1)}} \left(\prod_{v \in S} m_v(\Omega_v)\right) |t|_\infty^{|\w|} + O(|t|_\infty^{|\w|-w_{\min}}),
\end{align*}
as a function of $t$, where the implied constant depends only on $K$, $\Omega_\infty$, $\Omega_v$, and $\w$.
\end{proposition}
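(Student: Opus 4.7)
The plan is to derive Proposition \ref{prop:O_F-Finite-Geometric-Sieve} as a direct specialization of Lemma \ref{lem:O_F-Finite-Geometric-Sieve}, where the abstract rank $n$ $\mcA$-lattice $\Lambda$ is taken to be $\Lambda_\a = \a^{w_1}\times\cdots\times\a^{w_n}$. All of the lemma's hypotheses are already in hand: $\Lambda_\a$ is a rank $n$ $\mcA$-lattice in $K_\infty^n$, $\Omega_\infty$ is open and compact, and each $\Omega_v$ has boundary of $m_v$-measure zero by assumption. So there is essentially only one real task, namely to read the general formula of the lemma off against the claim of the proposition.

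The first step is to substitute the covolume $m_\infty(K_\infty^n/\Lambda_\a) = N(\a)^{|\w|} q^{|\w|(g-1)}$, which was computed in the paragraph immediately preceding the proposition by extending Weil's calculation $m_\infty(K_\infty^n/\mcA^n)=q^{n(g-1)}$ to the fractional-ideal case. This produces the denominator $N(\a)^{|\w|}q^{|\w|(g-1)}$ in the leading term of the proposition.

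The second step is to handle the local factors. For each $v\in\Val_0(K)$ the completion is $\Lambda_{\a,v}:=\Lambda_\a\otimes_\mcA\mcA_v = (\a\mcA_v)^{w_1}\times\cdots\times(\a\mcA_v)^{w_n}$, which has measure $m_v(\Lambda_{\a,v})=|\a|_v^{|\w|}$; in particular $m_v(\Lambda_{\a,v})=1$ whenever $v\nmid\a$. In the intended applications $S$ is taken coprime to $\a$, so the local denominators appearing in Lemma \ref{lem:O_F-Finite-Geometric-Sieve} collapse to $1$ and the product $\prod_{v\in S} m_v(\Omega_v)/m_v(\Lambda_{\a,v})$ reduces to $\prod_{v\in S}m_v(\Omega_v)$, matching the expression in the proposition.

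The error term $O(|t|_\infty^{|\w|-w_{\min}})$ is inherited without modification from Lemma \ref{lem:O_F-Finite-Geometric-Sieve}, with the same dependence of the implied constant on the data $K$, $\Omega_\infty$, $\Omega_v$, and $\w$ (the lattice $\Lambda_\a$ being determined by $\a$ and $\w$). There is no genuine obstacle here: the proposition is essentially a repackaging of the lemma, specialized to the family of weighted lattices $\Lambda_\a$ that will arise when counting integral representatives of weighted projective points with a prescribed finite scaling divisor, which is precisely the setup needed for the applications to Drinfeld module counts in the next section.
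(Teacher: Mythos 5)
Your proof is correct and follows the same route as the paper: the proposition is stated immediately after Lemma \ref{lem:O_F-Finite-Geometric-Sieve} precisely as a direct specialization to the lattice $\Lambda=\Lambda_\a$, with the covolume $m_\infty(K_\infty^n/\Lambda_\a)=N(\a)^{|\w|}q^{|\w|(g-1)}$ substituted from the Weil-style computation just preceding it. You are also right to flag the local normalizations $m_v(\Lambda_{\a,v})=|\a|_v^{|\w|}$: as written, the proposition's factor $\prod_{v\in S}m_v(\Omega_v)$ is only correct when $S$ avoids the support of $\a$ (otherwise the lemma produces $\prod_{v\in S}m_v(\Omega_v\cap\Lambda_{\a,v})/m_v(\Lambda_{\a,v})$), a coprimality caveat the paper leaves implicit but which is harmless in the applications since $\mcA=\F_q[T]$ is a PID and one may take $\a=(1)$, and in general the class representatives $D_i$ can be chosen with support disjoint from $S$.
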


By Lemma \ref{lem:O_F-Infinite-Geometric-Sieve} we have the following proposition:

\begin{proposition}\label{prop:K-Infinite-Geometric-Sieve}
Let $\Omega_\infty\subset K_\infty^n$ be an open compact subset, and let $\a\subset \mcA$ be an integral ideal. For each place $v$ in $\Val_0(K)$, let $\Omega_v\subset \mcA_v^n$ be a subset whose boundary has measure $m_v(\partial \Omega_v)=0$.
Let $\w$ be an $n$-tuple of positive integers.
Assume that 
\begin{equation}\label{eq:K-Infinite-Geometric-Sieve}
\lim_{d\to\infty} \limsup_{b \to \infty}\frac{\#\{ x\in \Lambda_\a \cap (\pi_\infty^{-b} \ast_{\w} \Omega_\infty) : x\not\in \Omega_v \text{ for some } v\in \Val_d(K)\}}{q^{b|\w|}}=0.
\end{equation}
Then
\begin{align*}
&\#\{x\in \Lambda_\a \cap (\pi_\infty^{-b} \ast_\w \Omega_\infty) : x\in \Omega_\p \text{ for all } v\in \Val_0(K)\}\\
&\hspace{1cm}\sim \frac{m_\infty(\Omega_\infty)}{N(\a)^{|\w|} q^{\#\w(g-1)}} \left(\prod_{v \in \Val_0(K)} m_v(\Omega_v)\right) q^{b|\w|}.
\end{align*}
\end{proposition}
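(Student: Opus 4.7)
The plan is to deduce this proposition as a direct specialization of Lemma \ref{lem:O_F-Infinite-Geometric-Sieve} to the lattice $\Lambda=\Lambda_\a=\a^{w_1}\times\cdots\times\a^{w_n}$, in exact analogy with how Proposition \ref{prop:FunctionFieldIdealLipschitz} was deduced from Proposition \ref{prop:FunctionFieldLipschitz}, and how Proposition \ref{prop:O_F-Finite-Geometric-Sieve} was deduced from Lemma \ref{lem:O_F-Finite-Geometric-Sieve}. The hypothesis of Lemma \ref{lem:O_F-Infinite-Geometric-Sieve} with $\Lambda=\Lambda_\a$ is literally the assumed tail bound (\ref{eq:K-Infinite-Geometric-Sieve}), so no further verification is required, and the lemma applies immediately.

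To extract the stated asymptotic, I would then substitute the explicit covolumes for $\Lambda_\a$. The global covolume
\[
m_\infty(K_\infty^n/\Lambda_\a)=N(\a)^{|\w|}q^{|\w|(g-1)}
\]
was already computed in the discussion preceding Proposition \ref{prop:FunctionFieldIdealLipschitz} by a Riemann--Roch argument applied to the integral ideal $\a$. For each finite place $v$, since $\a_v=\pi_v^{v(\a)}\mcA_v$, one has $m_v(\Lambda_{\a,v})=q^{-|\w|v(\a)\deg(v)}$, which equals $1$ for all but finitely many $v$ and whose reciprocals multiply to $N(\a)^{|\w|}$. Restricting each $\Omega_v\subset\mcA_v^n$ to $\Omega_v\cap\Lambda_{\a,v}$ and tracking these normalizations then reproduces the displayed formula.

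I do not anticipate any substantive obstacle: the only genuine analytic input, namely the Cauchy-criterion passage from finitely many to infinitely many local conditions, is already absorbed into Lemma \ref{lem:O_F-Infinite-Geometric-Sieve}. The remaining work is purely a bookkeeping exercise for the lattice covolumes of $\Lambda_\a$ at both infinite and finite places.
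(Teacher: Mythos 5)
Your proposal matches the paper's approach exactly: the paper offers no separate argument here beyond the single line ``By Lemma \ref{lem:O_F-Infinite-Geometric-Sieve} we have the following proposition,'' i.e., the proposition is obtained precisely by specializing Lemma \ref{lem:O_F-Infinite-Geometric-Sieve} to $\Lambda=\Lambda_\a$, observing that hypothesis (\ref{eq:K-Infinite-Geometric-Sieve}) is literally the lemma's hypothesis for this lattice, and inserting the covolume $m_\infty(K_\infty^n/\Lambda_\a)=N(\a)^{|\w|}q^{|\w|(g-1)}$ computed in the preamble to Proposition \ref{prop:FunctionFieldIdealLipschitz}. You additionally spell out the local covolumes $m_v(\Lambda_{\a,v})$, which the paper leaves implicit; this is sound and in fact more careful than what the paper records.

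One small caution, which reflects on the paper's statement rather than your method: if you push your own bookkeeping all the way through, the factors $\prod_{v}m_v(\Lambda_{\a,v})^{-1}=N(\a)^{|\w|}$ coming from the denominators $m_v(\Lambda_v)$ in Lemma \ref{lem:O_F-Infinite-Geometric-Sieve} would cancel the $N(\a)^{|\w|}$ appearing in $m_\infty(K_\infty^n/\Lambda_\a)$, so the lemma yields $m_\infty(\Omega_\infty)q^{-|\w|(g-1)}\prod_v m_v(\Omega_v\cap\Lambda_{\a,v})/m_v(\Lambda_{\a,v})$ rather than the displayed $\frac{m_\infty(\Omega_\infty)}{N(\a)^{|\w|}q^{|\w|(g-1)}}\prod_v m_v(\Omega_v)$, and the two agree at places $v\mid\a$ only under an extra hypothesis (e.g.\ that $\Omega_v=\mcA_v^n$ there). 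Since your sentence ``tracking these normalizations then reproduces the displayed formula'' glosses over this, it is worth either verifying that $\a$ is coprime to the support of the nontrivial $\Omega_v$ in the intended application, or flagging the constant as stated. This does not change the verdict that your proof strategy coincides with the paper's.
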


The following lemma gives a useful criterion for checking the limit condition (\ref{eq:K-Infinite-Geometric-Sieve}) of Proposition \ref{prop:K-Infinite-Geometric-Sieve}.

\begin{lemma}\label{lem:Ek-Bha}
Let $K$ be a global function field and $Y\subset \A_{\mcA}^n$ a closed subscheme of codimension $c>1$. Let $\Omega_\infty\subset K_\infty^n$ be a bounded subset whose boundary has measure $m_\infty(\partial \Omega_\infty)=0$ and $m_\infty(\Omega_\infty)>0$. 
For each $v\in \Val_0(K)$, set
\[
\Omega_v = \{x\in \mcA_v^n : x \pmod{ \p_v} \not\in Y(\F_{q_v})\}.
\]
Then
\[
\# \{x\in \mcA^n \cap \pi_\infty^{-b} \ast_\w \Omega_\infty : x \pmod{\p_v} \not\in \Omega_v \text{ for some } v\in \Val_M(K)\} = O\left(\frac{q^{b|\w|}}{M^{c-1}\log(M)}\right),
\]
as a function of $b$, where the implied constant depends only on $\Omega_\infty$ and $Y$. In particular, the limit condition (\ref{eq:K-Infinite-Geometric-Sieve}) holds in this situation.
\end{lemma}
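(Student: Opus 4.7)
The argument would follow the blueprint of the Ekedahl--Bhargava geometric sieve, adapted to the function field setting. The plan is to apply a union bound over $v \in \Val_M(K)$ and then split the resulting sum according to whether $\deg v$ is small or large relative to $b$, handling each range with a different technique. First, by the union bound,
\[
\#\{x \in \mcA^n \cap \pi_\infty^{-b} \ast_\w \Omega_\infty : \exists\, v \in \Val_M(K),\ x \bmod \p_v \in Y(\F_{q_v})\} \leq \sum_{v \in \Val_M(K)} N_v(b),
\]
where $N_v(b) := \#\{x \in \mcA^n \cap \pi_\infty^{-b} \ast_\w \Omega_\infty : x \bmod \p_v \in Y(\F_{q_v})\}$. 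I would pick a threshold $T := \lfloor b w_{\min}/2 \rfloor$ and treat the ranges $M \leq \deg v \leq T$ (small places) and $\deg v > T$ (large places) separately.

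For the small places, I would set $\Omega_v := \{x \in \mcA_v^n : x \bmod \p_v \in Y(\F_{q_v})\}$ and apply Proposition \ref{prop:O_F-Finite-Geometric-Sieve}. Since $Y$ has codimension $c$, the Lang--Weil estimate gives $|Y(\F_{q_v})| \ll N(\p_v)^{n-c}$ (for all but finitely many $v$, whose contribution is absorbed into the implied constant), so $m_v(\Omega_v) \ll N(\p_v)^{-c}$, yielding $N_v(b) \ll q^{b|\w|}/N(\p_v)^c + q^{b(|\w|-w_{\min})}$. Summing the main terms using the function-field prime count $\#\{v : \deg v = d\} \leq q^d/d$ gives
\[
\sum_{\deg v \geq M} \frac{q^{b|\w|}}{N(\p_v)^c} \ll q^{b|\w|}\sum_{d \geq M} \frac{q^{-d(c-1)}}{d} \ll \frac{q^{b|\w|}}{M^{c-1}\log M},
\]
(in fact giving the much stronger geometric decay $q^{-M(c-1)}/M$), while the sum of the Box Lemma error terms over $\deg v \leq T$ is $\ll q^{b(|\w|-w_{\min}/2)}/b$, which is $o(q^{b|\w|})$ by the choice of $T$.

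For the large places ($\deg v > T$), I would invoke Bhargava's polynomial trick. Fix a nonzero $f \in \mcA[x_1, \ldots, x_n]$ vanishing on $Y$; then $x \bmod \p_v \in Y(\F_{q_v})$ with $f(x) \neq 0$ forces $\p_v \mid f(x)$. Swapping the order of summation and using that $\deg f(x) \leq b\, d_\w(f) + O(1)$ for $x$ in the box, each such $x$ contributes to at most $O(b/T)$ places $v$ with $\deg v > T$ dividing $f(x)$. To sharpen this to the required order one exploits the codimension $c \geq 2$ via Noether normalization: the generically finite projection $\pi : Y \to \A^{n-c}$ admits a discriminant $F \in \mcA[x_1, \ldots, x_{n-c}]$ off which every fiber has exactly $\deg \pi$ points, and the refined swap-of-summation count, together with a Schwartz--Zippel estimate for the exceptional locus $V(f) \cap \mcA^n \cap \pi_\infty^{-b} \ast_\w \Omega_\infty$, yields the tail bound. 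The main obstacle is precisely this large-place refinement: the naive divisibility bound only produces $O(q^{b|\w|}\, b/T)$, which does not decay after dividing by $q^{b|\w|}$, so extracting the full codimension $c \geq 2$ cancellation is essential. Combining both ranges yields the claimed $O(q^{b|\w|}/(M^{c-1}\log M))$; since this tends to $0$ as $M \to \infty$ uniformly in $b$, the limit condition (\ref{eq:K-Infinite-Geometric-Sieve}) follows immediately.
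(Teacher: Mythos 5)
The paper itself does not give a proof of this lemma: it cites Bhargava's geometric sieve \cite[Theorem~3.3]{Bha14} (building on Ekedahl), and notes that the method extends to arbitrary global fields \cite[Theorem~21]{BSW15} and to weighted dilations \cite[pg.~4]{BSW21}. Your proposal, by contrast, is a genuine reconstruction of that underlying argument, and it correctly captures the key structural ideas: the union bound over places of degree at least $M$, the split at a threshold of degree roughly $bw_{\min}/2$, the Lang--Weil estimate combined with Proposition~\ref{prop:O_F-Finite-Geometric-Sieve} for small places (which, as you correctly observe, even gives geometric decay $\ll q^{-M(c-1)}/M$, stronger than the stated polynomial bound that is really a number-field artifact), and the recognition that the large-place regime is the technical heart of the matter. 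Your diagnosis that the naive polynomial/divisibility trick alone yields only $O(q^{b|\w|}\,b/T)$, hence fails to decay, and that one must exploit codimension $c\geq 2$ via the fibration/discriminant refinement, is exactly the right reading of Bhargava's argument.

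That said, be honest about what remains unfinished: the large-place refinement is sketched at a level (``Noether normalization, discriminant $F$, Schwartz--Zippel'') that identifies the right ingredients but does not verify the details, and carrying them out over a general global function field with weighted boxes involves some care (for instance, controlling the degree of $f(x)$ under the weighted dilation, and handling the exceptional locus $V(f)$ uniformly in $b$). This is not a defect in your understanding so much as a faithful reflection of the fact that the cited proofs are nontrivial; the paper itself elects to outsource precisely this work to the references rather than reproduce it. One small bookkeeping note: you temporarily redefine $\Omega_v$ to be the ``bad'' set $\{x : x \bmod \p_v \in Y(\F_{q_v})\}$, which is the complement of the $\Omega_v$ in the statement — fine for internal use, but worth flagging to avoid confusion with the conventions of Proposition~\ref{prop:K-Infinite-Geometric-Sieve}.
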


\begin{proof}
The case $K=\Q$ is due to Bhargava \cite[Theorem 3.3]{Bha14} (which generalized a result of Ekedahl \cite{Eke91}). Bhargava's method generalizes to arbitrary global fields (as noted in \cite[Theorem 21]{BSW15}) and to arbitrary weights (as noted in \cite[pg.~4]{BSW22}).
\end{proof}

\section{Counting points on weighted projective stacks}\label{sec:PROJ}

In this section we use the results of the previous section, concerning weighted geometry of numbers over global function fields, to prove asymptotics for the number of points of bounded height on weighted projective stacks satisfying prescribed local conditions.

 We maintain the notation of the previous sections. In particular, let $\mcC$ be a smooth, projective, geometrically connected genus $g$ curve over the finite field $\F_q$ and let $K$ be the function field of $\mcC$. Denote by $h_K$ the class number of $K$.
  Let $\w=(w_0,\dots, w_n)$ be an $(n+1)$-tuple of positive integers. Set $\# \w:=n+1$, $|\w|:=w_0+\cdots+w_n$, and $w_{\min}:=\min\{w_0,\dots,w_n\}$.
  
 For any field $F$ and any subset $\Omega\subseteq \mcP(\w)(F)$, let $\Omega^{\aff}\subset F^{n+1}$ denote the subset of the affine cone of $\mcP(\w)(F)$ above $\Omega$ (i.e., the preimage of $\Omega$ with respect to the map $(F^{n+1}-\{0\})\to \mcP(\w)(F)$).

\subsection{Finitely many local conditions}

We first address the case in which finitely many local conditions are imposed.

\begin{theorem}\label{thm:WProjFin}
Let $\Omega_\infty\subseteq \mcP(\w)(K_\infty)$, and suppose that $\Omega_\infty^{\aff}\subset K_\infty^{n+1}$ is an open compact subset.
 Let $S\subset \Val_0(K)$ be a finite set of finite places, and for each $v\in S$ let $\Omega_v\subset \mcP(\w)(K_v)$ be a subset for which $m_v(\partial \Omega_v^{\aff})=0$. 
For $x\in \mcP(\w)(K)$, write $x\in \Omega$ if $x\in \Omega_v$ for all $v\in \Val(K)$.
Then, for $b\in \Z_{\geq 1}$, we have that
\begin{align*}
\#\{x\in \mcP(\w)(K): \Ht_\w(x) = b,\ x\in \Omega\}
= \kappa q^{|\w|b}
 + \begin{cases}
O\left(q^b b\right) &  \text{ if } n=d_\infty=1\\
O\left(q^{b(|\w|-w_{\min}/d_\infty)}\right) & \text{ else, }
\end{cases}
\end{align*}
as a function of $b$, where the leading coefficient is
\[
\kappa=\frac{h_{K}\gcd(q-1,\w)}{\zeta_K(|\w|)q^{\#\w(g-1)}(q-1)} \left(\frac{m_\infty(\{x\in \Omega_\infty^{\aff} : \Ht_\infty(x)= 0\})}{m_\infty(\{ x\in K_\infty^{n+1} : \Ht_\infty(x)=0\})}\right)
\left(\prod_{v\in S} m_v\left(\Omega_v^{\aff}\cap \mcA_v^{n+1}\right)\right).
\]
\end{theorem}

\begin{proof}

Let $h=h_\mcA$ denote the size of the class group $\Pic(\mcA)$ of the Dedekind domain $\mcA$, and let $D_1,\dots,D_h$ be a set of effective divisor representatives of $\Pic(\mcA)$. Then we have the following partition of $\mcP(\w)(K)$ into points whose finite scaling divisors are in the same divisor class:
\[
\mcP(\w)(K)=\bigsqcup_{i=1}^h \{x\in \mcP(\w)(K): [\mfD_0(x)]=[D_i]\}.
\] 
For each $D\in \{D_1,\dots,D_h\}$, consider the counting function
\[
M(D,b):=\#\{x\in \mcP(\w)(K) : \Ht_\w(x)= b,\ [\mfD_0(x)]=[D],\ x\in \Omega\}.
\]
Consider the action of the unit group $\mcA^\times=\F_{q}^\times$ on $K^{n+1}-\{0\}$ given by
\[
u\ast_\w (x_0,\dots,x_n):=(u^{w_0} x_0,\dots, u^{w_n}x_n),
\]
 and let $(K^{n+1}-\{0\})/\mcA^\times$ denote the corresponding set of orbits. 
 %Let $\Omega^{\aff}$ denote the subset of the affine cone of $\mcP(\w)(K)$ above $\Omega$.
We may describe $M(D,b)$, in terms of $\mcA^\times$-orbits in the affine cone of $\mcP(\w)(K)$, as follows: There is a bijection between the sets
\[
\{x=[x_0:\cdots:x_n]\in \mcP(\w)(K): \Ht_\w(x)=b,\ [\mfD_0(x)]=[D],\ x\in \Omega\}
\]
and
\[
 \{x=[(x_0,\dots,x_n)]\in (K^{n+1}-\{0\})/\mcA^\times : \Ht_\infty(x) - \deg(D)=b,\ \mfD_0(x)=D,
  x\in \Omega^{\aff}\}
 \]
 given by
 \[
 [x_0:\dots:x_n] \mapsto [(x_0,\dots,x_n)] .
 \]
From this it follows that
\[
M(D,b)=\#\{x\in (K^{n+1}-\{0\})/\mcA^\times :\Ht_{\infty}(x) - \deg(D)=b,\ \mfD_0(x)=D,\ x\in \Omega^\aff\}.
\]
In order to obtain an asymptotic for $M(D,b)$, we will first find an asymptotic for the related counting function,
\[
M'(D,b)=\#\{x\in (K^{n+1}-\{0\})/\mcA^\times : \Ht_{\infty}(x) - \deg(D) = b,\ D\leq\mfD_0(x),\ \Omega^{\aff}\},
\]
and then use M\"obius inversion.

Our first step towards finding an asymptotic for $M'(D,b)$ is to construct a fundamental domain for the ($\w$-weighted) action of the unit group $\mcA^{\times}$ on $K_\infty^{n+1}-\{0\}$. Define the open bounded sets
\[
\mcF(b):=\{x\in K_\infty^{n+1}-\{0\} : \Ht_\infty(x)=b\}.
\]
These sets are $\mcA^\times$-stable, in the sense that if $u\in \mcA^\times$ and $x\in \mcF(b)$ then $u\ast_\w x\in \mcF(b)$; this can be verified as follows:
\begin{align*}
\Ht_{\infty}(u\ast_{\w} x)
&= -d_\infty \min_i\left\{\left\lfloor\frac{v_\infty(u\ast_\w x)}{w_i}\right\rfloor\right\}\\
&=-d_\infty \min_i\left\{ \left\lfloor v_\infty \left(u \right)+\frac{v_\infty(x)}{w_i}\right\rfloor\right\}\\
&=-d_\infty \min_i\left\{\left\lfloor\frac{v_\infty(x)}{w_i}\right\rfloor\right\}\\
&=\Ht_{\infty}(x).
\end{align*}
Similarly, for any $t\in K_\infty$, we have that
\begin{align*}
\Ht_{\infty}(t\ast_{\w} x)
&= -d_\infty \min_i\left\{\left\lfloor\frac{v_\infty(t\ast_\w x_i)}{w_i}\right\rfloor\right\}\\
&= -d_\infty \min_i\left\{\left\lfloor v_\infty\left(t\right)+\frac{v_\infty(x_i)}{w_i}\right\rfloor\right\}\\
&= -d_\infty \left( v_\infty(t)+\min_i\left\{\left\lfloor\frac{v_\infty(x_i)}{w_i}\right\rfloor\right\}\right)\\
&=\Ht_{\infty}(x) - d_\infty v_\infty(t).
\end{align*}

 Note that the image of the map $\Ht_{\infty}\colon K_{\infty}\to \R$ is $\{{d_\infty n} : n\in \Z\}$. Thus the set $\mcF(b)$ is empty if $b\not\in d_\infty \Z$. When $b\in d_\infty \Z$, we have that
\[
\mcF(b)=\pi_\infty^{-b/d_\infty }\ast_{\w}\mcF(0).
\]
 
Let $\d\subset \mcA$ denote the ideal of the ring $\mcA$ corresponding to the divisor $D$, and define the lattice 
\[
\Lambda_D:=\d^{w_0}\times\cdots\times \d^{w_n}\subset K_\infty^{n+1}.
\]
Observe that the condition that the element $x\in K^{n+1}-\{0\}$ satisfies $D\leq \mfD_0(x)$, is equivalent to the condition that $x\in \Lambda_D-\{0\}$. Therefore
\[
M'(D,b)=\#\{x\in (\Lambda_D-\{0\})/\mcA^\times : \Ht_\infty(x)-\deg(D)=b,\ x\in \Omega^\aff\}.
\]
As the roots of unity of $K$ are $\F_{q}^\times$, and each $\F_{q}^\times$-orbit (with respect to the $\w$-weighted action) of an element of $(K-\{0\})^{n+1}$ contains exactly $(q-1)/\gcd(q-1, w_0,\dots,w_n)$ elements, we have that
\begin{align*}
M'(D,b)=\frac{\gcd(q-1,\w)}{q-1} \#\{\mcF(b+\deg(D)) \cap \Lambda_D \cap \Omega^{\aff}\}
+O\left(q^{b(|\w|-w_{\min}/d_\infty)}\right),
\end{align*}
where the error term accounts for the points counted by $M'(D,b)$ which are contained in the subvariety of the affine cone of $\mcP(\w)(K)$ consisting of points with at least one coordinate equal to zero. This subset can be estimated using Proposition \ref{prop:O_F-Finite-Geometric-Sieve}.

%BY GAURENTEEING $x\in \mcA^{n+1}$ WE HAVE $\HT_{\infty}(x)\geq 1$ (I THINK) THIS MIGHT BE WHERE MISSING $d_\infty$ FACTOR APPEARS

If $b\not\equiv -\deg(D)\pmod{d_\infty}$, then $\mcF(b)=0$, and thus $M'(D,b)=0$. On the otherhand, when $b\equiv -\deg(D) \pmod{d_\infty}$, we may applying Proposition \ref{prop:O_F-Finite-Geometric-Sieve} with the open compact set $\Omega_\infty^\aff\cap\mcF(0)$ and $\a=\d$, to obtain the following asymptotic:
\begin{equation}\label{eq:I<=c asymptotic}
\begin{aligned}
&M'(D,b)\\
&\vspace{5mm} =\frac{m_\infty(\Omega_\infty^{\aff}\cap \mcF(0))\gcd(q-1,\w)}{N(\d)^{|\w|}q^{\#\w (g-1)}(q-1)} \left(\prod_{v\in S} m_v\left(\Omega_v^{\aff}\cap \mcA_v^{n+1}\right)\right) (q^b N(\d))^{|\w|}+ O\left(q^{b(|\w|-w_{\min}/d_\infty)}\right)\\
&\vspace{5mm} =\frac{m_\infty(\Omega_\infty^\aff\cap\mcF(0))\gcd(q-1,\w)}{q^{\#\w(g-1)}(q-1)} \left(\prod_{v\in S} m_v\left(\Omega_v^{\aff}\cap \mcA_v^{n+1}\right)\right) q^{b|\w|}+ O\left(q^{b(|\w|-w_{\min}/d_\infty)}\right).
\end{aligned}
\end{equation}
Note that
\[
\frac{m_\infty(\Omega_\infty^{\aff}\cap \mcF(0))}{m_\infty(\mcF(0))}=\frac{m_\infty(\{x\in \Omega_\infty^{\aff} : \Ht_\infty(x)= 0\})}{m_\infty(\{ x\in K_\infty^{n+1} : \Ht_\infty(x)=0\})}.
\]
As $m_\infty(\mcF(0))= 1-q^{-d_\infty|\w|}$, we have 
\[
m_\infty(\Omega_\infty^{\aff}\cap \mcF(0))=(1-q^{-d_\infty|\w|}) \frac{m_\infty(\{x\in \Omega_\infty^{\aff} : \Ht_\infty(x)= 0\})}{m_\infty(\{ x\in K_\infty^{n+1} : \Ht_\infty(x)=0\})}.
\]
Set $\kappa'$ equal to the leading coefficient of the asymptotic (\ref{eq:I<=c asymptotic}), so that
\[
\kappa'=\frac{\left(1-q^{-d_\infty|\w|}\right) \gcd(q-1,\w)}{q^{\#\w(g-1)}(q-1)} 
 \left(\frac{m_\infty(\{x\in \Omega_\infty^{\aff} : \Ht_\infty(x)= 0\})}{m_\infty(\{ x\in K_\infty^{n+1} : \Ht_\infty(x)=0\})}\right)
\left(\prod_{v\in S} m_v\left(\Omega_v^{\aff}\cap \mcA_v^{n+1}\right)\right).
\]
 Let $\DIV^+(\mcC)$ denote the set of effective divisors on $\mcC$. For $x\in \Lambda_D$ we have $\mfD_0(x)=D+D'$ for some effective divisor $D'\in \DIV^+(\mcC)$. Then
\[
M'(D,b)=\sum_{D'\in \DIV^+(\mcC)} M(D+D', b-\deg(D'))= \sum_{\substack{D'\in \DIV^+(\mcC)\\ \deg(D') \leq b}} M(D+D', b-\deg(D')),
\]
where the final equality follows from the fact that $M(D,b)$ is zero for $b<0$.

We now apply M\"obius inversion and use our asymptotic (\ref{eq:I<=c asymptotic}) for $M'(D,b)$. For $b\equiv -\deg(D)\pmod{d_\infty}$ we have
\begin{align*}
M(D,b) &= \sum_{\substack{D'\in \DIV^+(\mcC) \\ \deg(D') \leq b}} \mu(D') M'(D+D', b-\deg(D'))\\
&=\sum_{\substack{D'\in \DIV^+(\mcC) \\ \deg(D') \leq b}} \mu(D')\left(\kappa'\left(q^{b-\deg(D')}\right)^{|\w|}+ O\left(\left(q^{b-\deg(D')}\right)^{|\w|-w_{\min}/d_\infty}\right)\right)\\
&=\kappa' q^{b|\w|}
\left(\sum_{D'\in \DIV^+(\mcC)} \mu(D')q^{-|\w|\deg(D')}
 - \sum_{\substack{D'\in \DIV^+(\mcC) \\ \deg(D') \leq b}}\mu(D')q^{-|\w|\deg(D')}\right)\\ 
&\hspace{20mm} +O\left(q^{b(|\w|-w_{\min}/d_\infty)} \sum_{\substack{D'\in \DIV^+(\mcC) \\ \deg(D') \leq b}}q^{-(|\w|-w_{\min}/d_\infty)\deg(D')}\right)\\
&=\kappa' q^{b|\w|}\left(\frac{1}{\zeta_\mcA(|\w|)}
 - O\left(q^{-bn}\right)\right) 
+ \begin{cases}
O\left(q^{b}b\right) & \text{ if } n=d_\infty=1\\
O\left(q^{b(|\w|-w_{\min}/d_\infty)}\right) & \text{ else, }
\end{cases}\\
&= \frac{\kappa'}{\zeta_\mcA(|\w|)} q^{b|\w|}
 + \begin{cases}
O\left(q^{b}b\right) & \text{ if } n=d_\infty=1\\
O\left(q^{b(|\w|-w_{\min}/d_\infty)}\right) & \text{ else. }
\end{cases}
\end{align*}
When $b\not\equiv -\deg(D) \pmod{d_\infty}$, then $M'(D,b)=0$, and thus $M(D,b)=0$.

Let $\Pic^0(\mcC)$ denote the finite group of degree zero divisors on $\mcC$ modulo divisors of elements of the units $K^\times$ of $K$. From the exact sequence 
\[
0 \rightarrow \Pic^0(\mcC) \rightarrow \Pic(\mcA) \xrightarrow{\deg} \Z/d_\infty\Z \rightarrow 0,
\]
we have that for any $a\in \Z/d_\infty \Z$ there are exactly $h_K=\#\Pic^0(\mcC)$ elements of $\Pic(\mcA)$ whose degree is congruent to $a$ modulo $d_\infty$. Therefore, summing over the divisor class representatives $D_i$ of $\Pic(\mcA)$ gives
\begin{align*}
\#\{x\in \P^n(K): \Ht(x) = b,\ x\in \Omega\}
&= \sum_{i=1}^{h} M(D_i,b)\\
&= \kappa q^{b|\w|}
 + \begin{cases}
O\left(q^{b}b\right) & \text{ if } n=d_\infty=1\\
O\left(q^{b(|\w|-w_{\min}/d_\infty)}\right) & \text{ else, }
\end{cases},
\end{align*}
where
\begin{align*}
\kappa&=\frac{\kappa' h_K}{\zeta_\mcA(|\w|)}\\
&=\frac{h_{K}\gcd(q-1,\w)}{\zeta_K(|\w|)q^{\#\w(g-1)}(q-1)} \left(\frac{m_\infty(\{x\in \Omega_\infty^{\aff} : \Ht_\infty(x)= 0\})}{m_\infty(\{ x\in K_\infty^{n+1} : \Ht_\infty(x)=0\})}\right)
\left(\prod_{v\in S} m_v\left(\Omega_v^{\aff}\cap \mcA_v^{n+1}\right)\right).
\end{align*}
\end{proof}

\subsection{Infinitely many local conditions}

We now count points on weighted projective stacks satisfying infinitely many local conditions.

\begin{theorem}\label{thm:WProjInfty}
Let $\Omega_\infty\subseteq \mcP(\w)(K_\infty)$ be a subset for which $\Omega_\infty^{\aff}\subset K_\infty^{n+1}$ is an open compact subset. For each $v\in \Val_0(K)$ let $\Omega_v\subset \mcP(\w)(K_v)$ be a subset for which $m_v(\partial \Omega_v^{\aff})=0$. Assume also that, for any open compact set $\Psi\subset K_\infty^{n+1}$, we have 
\begin{align}\label{eq:WProjInfty}
\lim_{d\to\infty}\limsup_{b\to\infty} \frac{\#\{x\in\mcA^{n+1}\cap \left(\pi_\infty^{-b} \ast_\w \Psi\right) : x\notin \Omega_v \text{ for some } v\in \Val_0(K) \text{ with } \deg(v)> d
\}}{q^{b|\w|}}=0.
\end{align}
Then
\begin{align*}
\#\{x\in \mcP(\w)(K): \Ht_\w(x) = b,\ x\in \Omega\}
\sim \kappa q^{b|\w|},
 \end{align*}
as a function of $b$, where the leading coefficient is
\[
\kappa=\frac{h_{K}\gcd(q-1,\w)}{\zeta_K(|\w|)q^{\#\w(g-1)}(q-1)} \left(\frac{m_\infty(\{x\in \Omega_\infty^{\aff} : \Ht_\infty(x)= 0\})}{m_\infty(\{ x\in K_\infty^{n+1} : \Ht_\infty(x)=0\})}\right)
\left(\prod_{v\in \Val_0(K)} m_v\left(\Omega_v^{\aff}\cap \mcA_v^{n+1}\right)\right).
\]
\end{theorem}

\begin{proof}
The proof is the same as the proof of Theorem \ref{thm:WProjFin}, but with the use of Proposition \ref{prop:O_F-Finite-Geometric-Sieve} replaced by Proposition \ref{prop:K-Infinite-Geometric-Sieve}. For any divisor 
\[
D=\sum_{v\in \Val(K)} n_v P_v\in \DIV(\mcC),
\]
set $D_v$ equal to $n_v$. In order to estimate
\begin{align*}
M'(D,b)=\frac{\gcd(q-1,\w)}{q-1} \#\{\mcF(b+\deg(D)) \cap \Lambda_D \cap \Omega^{\aff}\}
+O\left(q^{b(|\w|-w_{\min}/d_\infty)}\right)
\end{align*}
using Proposition \ref{prop:K-Infinite-Geometric-Sieve}, one must check that the local conditions
\begin{align*}
%\Theta_\infty &:=\{x\in \Omega_\infty^{\aff} : \Ht_\infty(x)=r-\deg(D)\}\\
\Theta_v &:=\{x\in \Omega_v^{\aff} \cap \mcA_v^{n+1} : \mfD_0(x)_v=D_v\}
\end{align*}
satisfy hypothesis (\ref{eq:K-Infinite-Geometric-Sieve}) of Proposition \ref{prop:K-Infinite-Geometric-Sieve}.

Applying de Morgan's laws to the assumption (\ref{eq:WProjInfty}), we may reduce the problem of verifying the limit condition (\ref{eq:WProjInfty}) for the $\Theta_v$, to verifying the limit condition (\ref{eq:K-Infinite-Geometric-Sieve}) for the local conditions
\[
\Theta'_v :=\{x\in \mcA_v^{n+1} : \mfD_0(x)_v=D_v\}.
\]
Applying Lemma \ref{lem:Ek-Bha} with the subscheme $x_0=x_1=\cdots=x_n=0$ shows that the sets
\[
\Theta''_v:=\{x\in \mcA_v^{n+1} : \mfD_0(x)_v=0\}
\]
satisfy (\ref{eq:K-Infinite-Geometric-Sieve}). But $\Theta''_v = \Theta'_v$ for all $v$ of sufficiently large degree, and thus the $\Theta'_v$ must also satisfy (\ref{eq:K-Infinite-Geometric-Sieve}).
\end{proof}

The next lemma follows immediately from Lemma \ref{lem:Ek-Bha}, and gives a useful tool for checking the limit condition (\ref{eq:WProjInfty}) of Theorem \ref{thm:WProjInfty}.

\begin{lemma}\label{lem:WPROJ-Ek-Bha}
Let $K$ be a global function field and $Y\subset \mcP_{\mcA}(\w)$ a closed substack of codimension $c>1$. For each $v\in \Val_0(K)$, set
\[
\Omega_v = \{x\in \mcP(\w)(\mcA) : x \pmod{ \p_v} \not\in Y(\F_{q_v})\}.
\]
 Then, for all bounded subsets $\Psi\subset K_\infty^{n+1}$ with $m_\infty(\partial \Psi)=0$ and $m_\infty(\Psi)>0$, we have that
\[
\# \{x\in \mcA^n \cap (\pi_\infty^{-r} \ast_\w \Psi) : x \pmod{\p_v} \not\in \Omega_v^\aff \text{ for some } v\in \Val_M(K)\} = O\left(\frac{q^{r|\w|}}{M^{c-1}\log(M)}\right),
\]
as a function of $r$, where the implied constant depends only on $\Psi$, $\w$, and $Y$. In particular, (\ref{eq:WProjInfty}) holds in this situation.
\end{lemma}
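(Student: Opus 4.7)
The plan is to lift the problem from the weighted projective space $\P(\w)$ to its affine cone in $\A^{n+1}_\mcA$, where the already-established Lemma \ref{lem:Ek-Bha} applies directly. I would introduce $\widetilde{Y}\subset \A^{n+1}_\mcA$, the affine cone over $Y$, defined as the preimage of $Y$ under the quotient map $\A^{n+1}_\mcA-\{0\}\to \P_\mcA(\w)$. Since this map is a $\G_m$-torsor (under the weighted action $\ast_\w$) with one-dimensional fibers, the codimension of $\widetilde{Y}$ in $\A^{n+1}_\mcA$ equals the codimension $c>1$ of $Y$ in $\P_\mcA(\w)$.

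Next, by the definition of $\Omega_v$, its affine cone $\Omega_v^\aff\subseteq \mcA_v^{n+1}$ consists exactly of those $x$ whose reduction modulo $\p_v$ avoids $\widetilde{Y}(\F_{q_v})$ (aside from the origin, whose contribution is absorbed in the error). Consequently, the set
\[
\{x\in \mcA^{n+1} \cap \pi_\infty^{-r} \ast_\w \Psi : x \pmod{\p_v} \notin \Omega_v^\aff \text{ for some } v\in \Val_M(K)\}
\]
is identified with
\[
\{x\in \mcA^{n+1} \cap \pi_\infty^{-r} \ast_\w \Psi : x \pmod{\p_v} \in \widetilde{Y}(\F_{q_v}) \text{ for some } v\in \Val_M(K)\},
\]
and applying Lemma \ref{lem:Ek-Bha} to $\widetilde{Y}$ (with $\Psi$ playing the role of $\Omega_\infty$) yields the required bound $O(q^{r|\w|}/(M^{c-1}\log M))$ directly.

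To close out the final assertion, I would divide this bound by $q^{r|\w|}$ and send $M\to\infty$, verifying the limit hypothesis (\ref{eq:WProjInfty}) of Theorem \ref{thm:WProjInfty} for the local conditions arising from avoidance of $Y$. There is no serious obstacle: the only point requiring any care is the codimension-preservation when passing from $Y$ to $\widetilde{Y}$, which is immediate from the $\G_m$-torsor structure away from the origin. Thus the lemma truly reduces to a single invocation of Lemma \ref{lem:Ek-Bha}, justifying the author's phrasing that the result ``follows immediately.''
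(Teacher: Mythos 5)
Your proposal is correct and is essentially the proof the paper has in mind: pass to the affine cone $\widetilde Y$ over $Y$, observe that codimension is preserved since the quotient $\A^{n+1}_\mcA-\{0\}\to\P_\mcA(\w)$ is a $\G_m$-torsor with one-dimensional fibers (so adding the origin, a codimension-$(n+1)$ point, does not lower the codimension below $c$), and then invoke Lemma \ref{lem:Ek-Bha}. Since the paper states the lemma ``follows immediately'' without spelling this out, your write-up supplies precisely the missing details, with the codimension-preservation being the only point that needed justification.
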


\section{Counting Drinfeld Modules}

In this section we apply Theorem \ref{thm:WProjFin} and Theorem \ref{eq:WProjInfty} to moduli stacks of Drinfeld modules. This gives results for counting Drinfeld modules with prescribed local conditions. Throughout this section fix $\mathcal{A}=\F_q[T]$ and $K=\F_q(T)$.

\subsection{Preliminaries on Drinfeld modules}

Let $K\{\tau\}$ denote the non-commutative polynomial ring in $\tau$ with coefficients in $K$, which satisfies the commutation relations $\tau c=c^q\tau$ for all $c\in K$. 

\begin{definition}[Drinfeld module]
A \textbf{Drinfeld $\mcA$-module of rank $r$ defined over $K$} is a ring homomorphism 
\begin{align*}
\phi\colon\mcA&\to K\{\tau\}\\
a &\mapsto \phi_a,
\end{align*}
uniquely determined by the image of $T$,
\[
\phi_T = T + g_1 \tau + \cdots+ g_r \tau^r, \hspace{3mm} g_i\in K, \hspace{1mm} g_r\neq 0.
\]
\end{definition}

There is an isomorphism between the (compactified) moduli stack of Drinfeld modules, which we will denote by $\overline{\Dri_r}$, and the weighted projective stack $\mcP(q-1, q^2-1,\dots,q^r-1)$:
\begin{align*}
\overline{\Dri_r}&\xrightarrow{\sim} \mcP(q-1, q^2-1,\dots,q^r-1)\\
 T + g_1 \tau + \cdots+ g_r \tau^r &\mapsto [g_1:\cdots:g_r].
\end{align*} 
In particular, Drinfeld $\F_q[T]$-modules $\phi$ and $\psi$ defined over $\F_q(T)$, characterized by
\[
\phi_T=T + g_1 \tau + \cdots+ g_r \tau^r
\]
and
\[
\psi_T=T + h_1 \tau + \cdots+ h_r \tau^r
\]
respectively, are isomorphic if and only if there exists a $c\in K$ such that  $g_i=c^{q^i-1}h_i$ for all $i$ in $\{1, 2, 3,\dots,r\}$.

Let $\phi$ be a Drinfeld $\mcA$-module over $K$ characterized by
\[
\phi_T=T+g_1\tau+\cdots+g_r\tau^r.
\]
Set $\w_r:=(q-1,q^2-1,\dots,q^r-1)$.

\begin{definition}[Height of a Drinfeld module]\label{def:Drinfeld-height}
 Define the \textbf{height} of the Drinfeld module $\phi$ to be 
\[
\Ht(\phi):= \Ht_{\w_r}([g_1:\dots:g_r])=-\sum_{v\in \Val(K)} \deg(v) \min_{1\leq i\leq r}\left\{\left\lfloor\frac{v(g_i)}{q^i-1}\right\rfloor\right\}.
\]
\end{definition}

\begin{remark}
There are many different ways of defining the height of a Drinfeld module. The height defined above is most similar to the $J$-height and the graded height (see \cite[\S 2.2]{BPR21}). There is also a height defined by Taguchi \cite{Tag93}, which is analogous to the Faltings height. It would be interesting to count Drinfeld modules of bounded height with respect to some of these other heights. To the authors knowledge, the height given in Definition \ref{def:Drinfeld-height}  has not previously appeared in the literature, although it arises naturally from the geometry of the moduli stack of Drinfeld modules.  
\end{remark}

\subsection{Counting Drinfeld modules satisfying finitely many local conditions}

Let $\mcD_r(b)$ denote the set of isomorphism classes of rank $r$ Drinfeld $\mcA$-modules over $K$ of height $b$. Then Theorem \ref{thm:WProjFin} can be used to estimate the cardinality of $\mcD_r(b)$.

\begin{theorem}
The number of rank $r$ Drinfeld $\F_q[T]$-modules over $\F_q(T)$ with height equal to the positive integer $b$ is
\[
\#\mcD_r(b)=\frac{q^{r}}{\zeta_{\F_q(T)}\left(\frac{q^{r+1}-q}{q-1}\right)} q^{b\frac{q^{r+1}-q}{q-1}} +\begin{cases}
 O(q^b b) & \text{ if } r=2,\\
 O\left(q^{b\frac{q^{r+1}-q^2+q-1}{q-1}}\right) & \text{ if } r>2,
\end{cases}
\]
as a function of $b$.
\end{theorem}

\begin{proof}
This result follows from applying Theorem \ref{thm:WProjFin}, with trivial local conditions, to the moduli stack of Drinfeld modules, $\overline{\Dri_r}\cong \mcP(q-1,q^2-1,\dots,q^r-1)$. Note that the number of Drinfeld modules of rank $<r$ (i.e., those with $g_r=0$) is $O\left(q^{b\frac{q^r-q}{q-1}}\right)$, which  is accounted for in the error term.
\end{proof}

\begin{definition}[Minimal equation]
Let $\p\subset \mcA$ be a prime ideal. An equation
\[
\phi_T=T+g_1\tau+\cdots+g_r\tau^r
\] 
defining a Drinfeld $\mcA$-module is said to be \textbf{minimal at $\p$} if $g_j\in \mcA$ for all $j$ and there exists an $i$ for which $v_\p(g_i)<q^i-1$.
\end{definition}

As $\overline{\Dri_r}\cong \mcP(q-1,q^2-1,\dots,q^r-1)$, we see that for each Drinfeld module $\phi$ (up to isomorphism) and prime $\p$, there exists a equation defining $\phi$ which is minimal at $\p$.

We now define several reduction types of Drinfeld modules.

\begin{definition}[Reduction types of Drinfeld modules]\label{def:reduction-types}
Let $\p\subset \mcA$ be a prime ideal. A Drinfeld $\mcA$-module $\phi$ of rank $r$ defined by by a $\p$-minimal equation
\[
\phi_T=T+g_1\tau+\cdots+g_r\tau^r
\]
is said to have \textbf{stable reduction of rank $s$} at $\p$ if $s=\max\{i : \ord_\p(g_i)=0\}$. We simply say that the Drinfeld module $\phi$ has \textbf{stable reduction} at the prime ideal $\p$ if it is stable of any rank; otherwise we say that $\phi$ has \textbf{unstable reduction} at $\p$. We say that the Drinfeld module $\phi$ has \textbf{good reduction} at the prime ideal $\p$ if it has stable reduction of rank $r$; otherwise we say that $\phi$ has \textbf{bad reduction} at $\p$. 
\end{definition}

\begin{theorem}\label{thm:Drinfeld-single-local-condition}
Let $\mcL$ be one of the local conditions in Table \ref{tab:LocalConditions}, and let $\p\subseteq \F_q[T]$ be a prime ideal of norm $N(\p)$. Then the number of isomorphism classes of rank $r$ Drinfeld $\F_q[T]$-modules over $\F_q(T)$ with reduction type $\mcL$ at the prime ideal $\p$, and with height equal to the positive integer $b$, is
\begin{align}\label{eq:single-local-condition}
\kappa_{\mcL}\frac{q^{r}}{\zeta_{\F_q(T)}\left(\frac{q^{r+1}-q}{q-1}\right)} 
 q^{b\frac{q^{r+1}-q}{q-1}} +\begin{cases}
 O(q^b b) & \text{ if } r=2,\\
 O\left(q^{b\frac{q^{r+1}-q^2+q-1}{q-1}}\right) & \text{ if } r>2,
\end{cases}
\end{align}
as a function of $b$,
where $\kappa_{\mcL}=\kappa'_{\mcL}\frac{1}{1-N(\p)^{r-\frac{q^{r+1}-q}{q-1}}}$ with $\kappa'_{\mcL}$ given in Table \ref{tab:LocalConditions}.
\end{theorem}

\begin{proof}
\underline{Stable reduction of rank $s$ case:}
We would like to consider the local condition $\Omega_\p\subset \mcP(q-1,\dots,q^r-1)(K_\p)$ consisting of points corresponding to Drinfeld modules with stable reduction of rank $s$ at $\p$. Let $\F_{\p}$ denote the residue field $\F_q[T]/\p$ at the prime ideal $\p$. The number of rank $r$ Drinfeld modules over $\F_{\p}$ with stable reduction of rank $s$ is
\[
\#\{(g_1,\dots,g_r)\in \F_{\p}^r : g_{s}\not\equiv 0 \pmod{\p},\ g_j\equiv 0 \pmod{\p}\ \forall j>s\}=N(\p)^{s}-N(\p)^{s-1}.
\]
For each element in the above set we may choose a lift of that element $(g_1,\dots,g_r)\in \mcA^r$. Then consider the affine local condition
\[
\Omega_{\p,0}^{\aff}(g_1,\dots,g_r)=\{(x_1,\dots,x_r)\in \mcA_{\p}^r : |x_j-g_j|_\p\leq \frac{1}{N(\p)} \text{ for all } 1\leq j\leq r\},
\]
which has $\p$-adic measure
\[
m_{\p}(\Omega_{\p,0}^{\aff}(g_1,\dots,g_r))=\frac{1}{N(\p)^r}.
\]
Let $\pi_\p$ be a uniformizer at $\p$, and for each non-negative integer $k\in\Z_{\geq 0}$ define the set
\begin{align*}
\Omega_{\p,k}^{\aff}(g_1,\dots,g_r)&=\pi_\p^k\ast_{(q-1,\dots,q^r-1)} \Omega_{\p,0}^{\aff}(g_1,\dots,g_r)\\
&=\{(x_1,\dots,x_r)\in \mcA_\p^r : |x_j-N(\p)^{k(q^j-1)}g_j|_\p \leq \frac{1}{N(\p)^{kq^j-k+1}} \text{ for all } 1\leq j \leq r\}.
\end{align*}
These sets have $\p$-adic measure
\[
m_\p(\Omega_{\p,k}^{\aff})=N(\p)^{-r+k\left(r-\frac{q^{r+1}-q}{q-1}\right)}.
\]
Observe that
\[
\Omega_\p^{\aff}=\bigsqcup_{(g_1,\dots,g_r)}\bigsqcup_{k\geq 0} \Omega_{\p,k}^{\aff}(g_1,\dots,g_r).
\]
%Summing over each of the $N(\p)^{s}-N(\p)^{s-1}$ possibilities for $(g_1,\dots,g_r)$ gives a factor of 
%\[
%\frac{N(\p)^{s}-N(\p)^{s-1}}{N(\p)^r}.
%\]
Applying Theorem \ref{thm:WProjFin} gives the asymptotic (\ref{eq:single-local-condition}) with
\begin{align*}
\kappa_{\textnormal{stable}, s}
&=m_\p(\Omega_\p^{\aff}\cap\mcA_\p^r)\\
&=\sum_{(g_1,\dots,g_r)}\sum_{k=0}^\infty m_\p(\Omega_{\p,k}^{\aff}(g_1,\dots,g_r)\\
&=(N(\p)^s-N(\p)^{s-1})\sum_{k=0}^{\infty} N(\p)^{-r+k\left(r-\frac{q^{r+1}-q}{q-1}\right)}\\
&=\frac{N(\p)^s-N(\p)^{s-1}}{N(\p)^r}\left(\frac{1}{1-N(\p)^{r-\frac{q^{r+1}-q}{q-1}}}\right).
\end{align*}

\underline{Stable reduction of rank $\geq s$ case:} We sum over the stable of rank $s, s+1,\dots, r$ cases. Doing so we obtain the constant
\[
\kappa'_{\textnormal{stable}, \geq s}=\sum_{i=s}^r \frac{N(\p)^{i}-N(\p)^{i-1}}{N(\p)^r}=\frac{N(\p)^r-N(\p)^{s-1}}{N(\p)^r}.
\]

\underline{Good reduction case:} This is the special case when $s=r$ in the stable of rank $s$ case. We find that
\[
\kappa'_{good}=\frac{N(\p)^{r}-N(\p)^{r-1}}{N(\p)^r}=\frac{N(\p)-1}{N(\p)}.
\]

\underline{Bad reduction case:} This is the complement of the good reduction case, so in this case we find that the constant is
\[
\kappa'_{bad}=1-\kappa'_{good}=\frac{1}{N(\p)}.
\]

\underline{Stable reduction case:} 
This is the special case when $s=1$ in the stable of rank $\geq s$ case. We find that
the constant is
\[
\kappa'_{stable}=\sum_{s=1}^r \frac{N(\p)^{s}-N(\p)^{s-1}}{N(\p)^r}=\frac{N(\p)^r-1}{N(\p)^r}.
\]

\underline{Unstable reduction case:} This is the complement of the stable reduction case, so in this case we find that the constant is
\[
\kappa'_{unstable}=1-\kappa'_{stable}=\frac{1}{N(\p)^r}.
\]
\end{proof}

%The following examples show how Theorem \ref{thm:Drinfeld-single-local-condition} can be used to compute the proportion of Drinfeld modules satisfying a single local conditions.

By Theorem \ref{thm:WProjFin} we can extend Theorem \ref{thm:Drinfeld-single-local-condition} for counting Drinfeld modules satisfying a single local condition to the case of finitely many local conditions, obtaining Theorem \ref{thm:Drinfeld-finite-local-conditions}:  

\begin{reptheorem}[\ref{thm:Drinfeld-finite-local-conditions}]
Let $\p_1,\p_2,\dots,\p_m\subseteq \F_q[T]$ be a finite set of distinct prime ideals of $\F_q[T]$. For each $\p_i$ let $\mcL_i$ be one of the local conditions in Table \ref{tab:LocalConditions}.  Then the number isomorphism classes of rank $r$ Drinfeld $\F_q[T]$-modules over $\F_q(T)$ with reduction type $\mcL_i$ at $\p_i$ for each $i$, and with height equal to the positive integer $b$, is
\[
\frac{q^{r}}{\zeta_{\F_q(T)}\left(\frac{q^{r+1}-q}{q-1}\right)} 
\left(\prod_{i=1}^m \kappa_{\mcL_i}\right) q^{b\frac{q^{r+1}-q}{q-1}} +\begin{cases}
 O(q^b b) & \text{ if } r=2,\\
 O\left(q^{b\frac{q^{r+1}-q^2+q-1}{q-1}}\right) & \text{ if } r>2,
\end{cases}
\]
as a function of $b$.
\end{reptheorem}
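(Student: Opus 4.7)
The plan is to reduce directly to Theorem \ref{thm:WProjFin} applied to the moduli space $\overline{\Dri_r}\cong \P(\w_r)$ with weights $\w_r=(q-1,q^2-1,\dots,q^r-1)$, taking $\Omega_\infty = K_\infty^r$ (i.e.\ no archimedean restriction) and taking as finite local conditions the subsets $\Omega_{\p_i}\subset K_{\p_i}^r$ that cut out rank $r$ Drinfeld modules with reduction of type $\mcL_i$ at $\p_i$. The proof is then a direct extension of the proof of Theorem \ref{thm:Drinfeld-single-local-condition}.

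First I would construct each $\Omega_{\p_i}^{\aff}\subset K_{\p_i}^r$ exactly as in the single-prime case: enumerate the residue tuples $(\overline{g}_1,\dots,\overline{g}_r)\in\F_{\p_i}^r$ that realize the reduction type $\mcL_i$ (counted by a case analysis as stable of rank $s$, stable of rank $\geq s$, good, bad, stable, or unstable), and let $\Omega_{\p_i}^{\aff}$ be the union of the closed balls of radius $N(\p_i)^{-1}$ around lifts of these tuples. The $\p_i$-adic measure of $\Omega_{\p_i}^{\aff}\cap\mcA_{\p_i}^r$ is then exactly $\kappa_{\mcL_i}$ as read off from Table \ref{tab:LocalConditions}; these measures were already computed in the proof of Theorem \ref{thm:Drinfeld-single-local-condition}, so no new local calculation is required.

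The essential new point, though mild, is independence across distinct primes: since the $\p_i$ are pairwise distinct, the joint local condition on $S=\{\p_1,\dots,\p_m\}$ is simply the Cartesian product $\prod_i \Omega_{\p_i}^{\aff}$, and the product measure that enters Theorem \ref{thm:WProjFin} factors as $\prod_{i=1}^m m_{\p_i}(\Omega_{\p_i}^{\aff}\cap\mcA_{\p_i}^r)=\prod_{i=1}^m \kappa_{\mcL_i}$. Plugging this into the formula of Theorem \ref{thm:WProjFin} (with $K=\F_q(T)$, $g=0$, $h_K=1$, $d_\infty=1$, and the archimedean volume ratio contributing the same constant as in Theorem \ref{thm:Drinfeld-Count}) yields the leading term $\frac{q^r}{\zeta_{\F_q(T)}\left(\frac{q^{r+1}-q}{q-1}\right)}\bigl(\prod_i \kappa_{\mcL_i}\bigr)q^{b\frac{q^{r+1}-q}{q-1}}$ together with the error terms dictated by the $n=d_\infty=1$ versus $n>1$ dichotomy in Theorem \ref{thm:WProjFin}, which correspond to $r=2$ and $r>2$ respectively.

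The one bookkeeping step left is that points of $\P(\w_r)(\F_q(T))$ with $g_r=0$ parametrize Drinfeld modules of rank strictly less than $r$, and these must be excluded from $\mcD_r$. Their contribution is bounded by the count of $\F_q(T)$-points of bounded height on the subspace $\P(q-1,\dots,q^{r-1}-1)\subset \P(\w_r)$, which by Theorem \ref{thm:WProjFin} (with $m=0$) is $O\left(q^{b\frac{q^r-q}{q-1}}\right)$; this is absorbed into the stated error in both the $r=2$ and $r>2$ cases. I do not expect any genuine obstacle: the whole content of the theorem rests on Theorem \ref{thm:WProjFin} and the per-prime local volume computations already carried out for Theorem \ref{thm:Drinfeld-single-local-condition}, and the extension to finitely many primes is formally just the multiplicativity of the local measures.
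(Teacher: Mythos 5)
Your proposal is correct and takes essentially the same route as the paper: the paper's proof of Theorem \ref{thm:Drinfeld-finite-local-conditions} is exactly to apply Theorem \ref{thm:WProjFin} to $\overline{\Dri_r}\cong\P(q-1,\dots,q^r-1)$ with the per-prime local conditions and local measures already computed in the proof of Theorem \ref{thm:Drinfeld-single-local-condition}, with multiplicativity of the local measures over the distinct primes $\p_i$ giving the product $\prod_i\kappa_{\mcL_i}$. Your bookkeeping of the $g_r=0$ locus (bounding its contribution via the count on $\P(q-1,\dots,q^{r-1}-1)$) also matches the remark made in the paper's proof of Theorem \ref{thm:Drinfeld-Count}.
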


%The following example illustrated how Theorem \ref{thm:Drinfeld-finite-local-conditions} may be used to compute the proportion of Drinfeld modules satisfying a finite set of local conditions.

\subsection{Counting Drinfeld modules satisfying infinitely many local conditions}

%\begin{definition}[semistable reduction of Drinfeld modules]
%A Drinfeld $\mcA$-module $\phi$ of rank $r$ defined by
%\[
%\phi_T=T+g_1\tau+\cdots+g_r\tau^r
%\]
%is said to have \textbf{semistable reduction of rank $s$} if it has stable reduction of rank greater than or equal to $s$ at all prime ideals $\p\subseteq \mcA$. We will simply say that the Drinfeld module $\phi$ is \textbf{semistable} if it is semistable of any rank (i.e., semistable of rank $1$). 
%\end{definition}

We now address consider the problem of counting Drinfeld modules satisfying and infinite set of local conditions.

\begin{definition}[Everywhere stable reduction]
A Drinfeld module $\phi$ has \textbf{everywhere stable reduction of rank $\geq s$} if it has stable reduction of rank $\geq s$ at all prime ideals of $\mcA$. A Drinfeld module $\phi$ has \textbf{everywhere stable reduction} if it has stable reduction at all prime ideals of $\mcA$ (i.e., everywhere stable reduction of rank $\geq 1$). 
\end{definition}

Recall that $\Val_0(\F_q(T))$ denotes the set of places of $\F_q(T)$ away from the place $\infty=(1/T)$. For each $v\in \Val_0(\F_q(T))$, let $\p_v$ denote the corresponding prime ideal of $\F_q[T]$.

\begin{theorem}\label{thm:Drinfeld-Infinite-Stable}
Let $r$ and $s$ be positive integers with $1\leq s<r$. Then the number of isomorphism classes of rank $r$ Drinfeld $\F_q[T]$-modules over $\F_q(T)$ which are everywhere stable of rank $\geq s$, and have height equal to the positive integer $b$, is asymptotic to
\[
 \frac{\zeta_{\F_q(T)}\left(-r+\frac{q^{r+1}-q}{q-1}\right)}{\zeta_{\F_q(T)}(r-s+1)} \frac{q^{r}}{\zeta_{\F_q(T)}\left(\frac{q^{r+1}-q}{q-1}\right)} q^{b\frac{q^{r+1}-q}{q-1}},
 %=\left(1-q^{-(r-s)}\right)\left(1-q^{-(r-s+1)}\right) \frac{q^{r}}{\zeta_{\F_q(T)}\left(\frac{q^{r+1}-q}{q-1}\right)} q^{b\frac{q^{r+1}-q}{q-1}},
\]
as a function of $b$.
\end{theorem}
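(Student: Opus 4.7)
The plan is to apply Theorem \ref{thm:WProjInfty} to the moduli space $\overline{\Dri_r} \cong \P(\w_r)$ with $\w_r = (q-1, q^2-1, \ldots, q^r-1)$, equipping every finite place with the local condition ``stable reduction of rank $\geq s$''. For each $v \in \Val_0(\F_q(T))$ with underlying prime $\p_v \subset \F_q[T]$, I would define $\Omega_v \subset \P(\w_r)(K_v)$ to be the set of points whose associated Drinfeld module has stable reduction of rank $\geq s$ at $\p_v$. Repeating the measure computation from the proof of Theorem \ref{thm:Drinfeld-single-local-condition} and summing the ``stable of rank $i$'' contributions for $i = s, s+1, \dots, r$ yields the local density
\[
m_v(\Omega_v) = \frac{N(\p_v)^r - N(\p_v)^{s-1}}{N(\p_v)^r} = 1 - N(\p_v)^{-(r-s+1)}.
\]

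Taking the product over all finite primes gives the Euler product
\[
\prod_{\p \subset \F_q[T]} \bigl(1 - N(\p)^{-(r-s+1)}\bigr) = \zeta_{\F_q(T)}(r-s+1)^{-1},
\]
which converges since $r - s + 1 \geq 2$ and simplifies, via $\zeta_{\F_q(T)}(u)^{-1} = (1 - q^{-u})(1 - q^{1-u})$, to $(1 - q^{-(r-s)})(1 - q^{-(r-s+1)})$, matching the second form of the asymptotic in the theorem.

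Next, I would verify the limit hypothesis (\ref{eq:WProjInfty}) of Theorem \ref{thm:WProjInfty} through Lemma \ref{lem:WPROJ-Ek-Bha}. A normalized integral representative $(g_1, \ldots, g_r)$ fails the ``stable of rank $\geq s$'' condition at $\p_v$ exactly when $g_s \equiv g_{s+1} \equiv \cdots \equiv g_r \equiv 0 \pmod{\p_v}$. This is the reduction modulo $\p_v$ of the closed subscheme $Y \subset \P(\w_r)_{\F_q[T]}$ defined by $g_s = g_{s+1} = \cdots = g_r = 0$, which has codimension $r - s + 1$. The hypothesis $s < r$ ensures this codimension is at least $2$, so Lemma \ref{lem:WPROJ-Ek-Bha} applies and delivers (\ref{eq:WProjInfty}).

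Combining the above with the leading coefficient of the unconditional count (Theorem \ref{thm:Drinfeld-Count}, itself obtained by applying Theorem \ref{thm:WProjFin} to $\P(\w_r)$ with trivial local conditions) yields the claimed leading term $\zeta_{\F_q(T)}(r-s+1)^{-1} \cdot \frac{q^{r}}{\zeta_{\F_q(T)}((q^{r+1}-q)/(q-1))} \cdot q^{b(q^{r+1}-q)/(q-1)}$. \textbf{The main technical point} is verifying the codimension-$\geq 2$ property of the bad locus so that Lemma \ref{lem:WPROJ-Ek-Bha} can be invoked; the strict inequality $s < r$ is essential, since for $s = r$ (everywhere good reduction) the bad locus would be a hypersurface and the sieve as stated would not deliver (\ref{eq:WProjInfty}) directly.
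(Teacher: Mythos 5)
Your proposal is correct and follows essentially the same route as the paper: apply Theorem \ref{thm:WProjInfty} to $\P(\w_r)$ with the ``stable of rank $\geq s$'' local condition at every finite place, compute the local densities as in Theorem \ref{thm:Drinfeld-single-local-condition}, and verify the geometric-sieve hypothesis (\ref{eq:WProjInfty}) via Lemma \ref{lem:WPROJ-Ek-Bha}. The only difference is cosmetic: you invoke Lemma \ref{lem:WPROJ-Ek-Bha} directly with the codimension-$(r-s+1)$ subscheme $Y=\{g_s=\cdots=g_r=0\}$, while the paper first observes the inclusion $\Theta_{r-1,\p}\subseteq\Theta_{s,\p}$ (so it suffices to treat $s=r-1$) and then uses the codimension-$2$ subscheme $Y=\{g_{r-1}=g_r=0\}$; both verify the same condition since $r-s+1\geq 2$ precisely when $s<r$, the hypothesis of the theorem. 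Your closing observation that $s<r$ is essential (else the bad locus is a hypersurface and Lemma \ref{lem:WPROJ-Ek-Bha} does not apply) is exactly the point the paper's reduction is tacitly exploiting.
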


\begin{proof}
%Set $\mcA=\F_q[T]$.
 We apply our result for counting points on weighted projective stacks satisfying infinitely many local conditions (i.e., Theorem \ref{thm:WProjInfty}), with the local conditions
\[
\Theta_{s,\p} = \{[g_1:\cdots:g_r]\in \mcP(q-1,\dots,q^r-1)(\mcA_\p) : v_\p(g_j)=0 \text{ for some } j\geq s\}.
\]
Note that the local condition $\Theta_{s,\p}$ corresponds to the set of Drinfeld modules with stable reduction of rank $\geq s$ at $\p$.
In order to apply Theorem \ref{thm:WProjInfty} we must check that the local conditions $\Theta_{s,\p}$ satisfy the limit condition (\ref{eq:WProjInfty}) of Theorem \ref{thm:WProjInfty}. We will do this by using Lemma \ref{lem:WPROJ-Ek-Bha}.

First note that $\Theta_{r-1,\p} \subseteq \Theta_{s,\p}$ for all $1\leq s\leq r-1$. It therefore suffices to show that the local conditions $\Theta_{r-1,\p}$ satisfy the limit criterion (\ref{eq:WProjInfty}). Consider the weighted projective stack $\mcP_\mcA(q-1,\dots,q^r-1)$, with coordinates $[x_1:\cdots:x_r]$. Let $Y$ denote the closed subscheme defined by $x_{r-1}=x_r=0$. Then the local conditions defined in Lemma \ref{lem:WPROJ-Ek-Bha},
\[
\Omega_\p=\{x\in \mcP(\w)(\mcA) : x\pmod{\p} \not\in Y(\F_\p)\},
\]
coincide with the local conditions $\Theta_{r-1,\p}$ for all prime ideals $\p$. In particular, by Lemma \ref{lem:WPROJ-Ek-Bha}, the local conditions $\Theta_{r-1,\p}$ satisfy the required limit condition (\ref{eq:WProjInfty}).

As the local conditions $\Theta_{s,\p}$ satisfy the limit criterion (\ref{eq:WProjInfty}), we may apply Theorem \ref{thm:WProjInfty}. Doing this, and using the constants from Table \ref{tab:LocalConditions} for the proportion of Drinfeld modules with stable reduction of rank $\geq s$, we have that the number of rank $r$ Drinfeld $\F_q[T]$-modules over $\F_q(T)$ with everywhere stable reduction of rank $\geq s$, and  height equal to the positive integer $b$, is asymptotic to 
\begin{align*}
&\left(\prod_{v\in \Val_0(\F_q(T))} \frac{N(\p_v)^r-N(\p_v)^{s-1}}{N(\p_v)^r}\cdot \frac{1}{1-N(\p)^{r-\frac{q^{r+1}-q}{q-1}}}\right)  \frac{q^{r}}{\zeta_{\F_q(T)}\left(\frac{q^{r+1}-q}{q-1}\right)} q^{b\frac{q^{r+1}-q}{q-1}}\\
 &\hspace{1cm}=\frac{\zeta_{\F_q(T)}\left(-r+\frac{q^{r+1}-q}{q-1}\right)}{\zeta_{\F_q(T)}(r-s+1)} \frac{q^{r}}{\zeta_{\F_q(T)}\left(\frac{q^{r+1}-q}{q-1}\right)} q^{b\frac{q^{r+1}-q}{q-1}}\\
 &\hspace{1cm}=\left(1-q^{-(r-s)}\right)\left(1-q^{-(r-s+1)}\right)\zeta_{\F_q(T)}\left(-r+\frac{q^{r+1}-q}{q-1}\right) \frac{q^{r}}{\zeta_{\F_q(T)}\left(\frac{q^{r+1}-q}{q-1}\right)} q^{b\frac{q^{r+1}-q}{q-1}},
\end{align*}
as a function of $b$.
\end{proof}

%The following examples show how Theorem \ref{thm:Drinfeld-Infinite-Stable} may be used to determine the proportion of Drinfeld modules with everywhere stable reduction of rank $\geq s$.

%Setting $s=1$ in Theorem REF, and noting that
%\[
%\prod_{v\in \Val_0(\F_q(T))} \frac{N(\p_v)^r-1}{N(\p_v)^r} = \frac{1}{\zeta_{\F_q(T)}(r)},
%\]
%we obtain the following result on counting Drinfeld modules wich are everywhere stable.

%\begin{corollary}
%The number of Drinfeld $\F_q[T]$-modules over $\F_q(T)$ which are everywhere stable and have height equal to the positive integer $b$ is asymptotic to
%\[
%\zeta_{\F_q(T)}(r)^{-1} \frac{q^{r}}{\zeta_{\F_q(T)}\left(\frac{q^{r+1}-q}{q-1}\right)} q^{rb} +\begin{cases}
% O(q^b\log(q^b)) & \text{ if } r=2,\\
% O\left(q^{b\frac{q^{r+1}-q^2+q-1}{q-1}}\right) & \text{ if } r>2,
%\end{cases}
%\]
%as a function of $b$.
%\end{corollary}

Note that the proof of Theorem \ref{thm:Drinfeld-Infinite-Stable} still works if we change finitely many of the local conditions. Doing so we obtain Theorem \ref{thm:Drinfeld-Infinite-Local-Conditions}:

\begin{reptheorem}[\ref{thm:Drinfeld-Infinite-Local-Conditions}]
Let $r$ and $s$ be positive integers with $1\leq s<r$. Let $S=\{\p_1,\dots,\p_m\}$ be a finite set of distinct prime ideals of $\F_q[T]$. For each $\p_i$ let $\mcL_i$ be one of the local conditions in Table \ref{tab:LocalConditions}. Then the number of isomorphism classes of rank $r$ Drinfeld $\F_q[T]$-modules over $\F_q(T)$ which have stable reduction of rank $\geq m$ everywhere outside of $S$, and reduction type $\mcL_i$ at $\p_i$ for each $\p_i\in S$, and of height equal to the positive integer $b$, is asymptotic to
\[
 \zeta_{\F_q(T)}(r-s+1)^{-1}\left(\prod_{\p_i\in S} \frac{\kappa'_{\mcL_i}}{1-N(\p_i)^{-r+s-1}}\right) \frac{q^{r}}{\zeta_{\F_q(T)}\left(\frac{q^{r+1}-q}{q-1}\right)} q^{b\frac{q^{r+1}-q}{q-1}},
\]
as a function of $b$.
\end{reptheorem}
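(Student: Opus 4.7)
The plan is to adapt the proof of Theorem \ref{thm:Drinfeld-Infinite-Stable} by modifying the local conditions at the finitely many primes in $S$. As before, I will work on the moduli space $\overline{\Dri_r}\cong\P(q-1,q^2-1,\dots,q^r-1)$ and apply Theorem \ref{thm:WProjInfty}. For each prime $\p\notin S$ take the local condition
\[
\Theta_{s,\p}=\{[g_1:\cdots:g_r]\in \P(\w_r)(\mcA_\p) : v_\p(g_j)=0 \text{ for some } j\geq s\},
\]
and for each $\p_i\in S$ take the local condition $\Omega_{\p_i}$ corresponding to the reduction type $\mcL_i$ (as explicitly described in the proof of Theorem \ref{thm:Drinfeld-single-local-condition}).

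The first step is to verify the limit hypothesis (\ref{eq:WProjInfty}) for this new collection of local conditions. Since $S$ is finite and the conditions on $\p\notin S$ are the same $\Theta_{s,\p}$ already treated in the proof of Theorem \ref{thm:Drinfeld-Infinite-Stable} via Lemma \ref{lem:WPROJ-Ek-Bha} applied to the closed subscheme $\{x_{r-1}=x_r=0\}\subset \P_{\mcA}(\w_r)$ (note this argument uses the inclusion $\Theta_{r-1,\p}\subseteq \Theta_{s,\p}$), the limit condition still holds: modifying finitely many places cannot affect the $\limsup$ of the relevant counting function normalized by $q^{b|\w_r|}$, since each individual place contributes only $O(q^{b(|\w_r|-w_{\min}/d_\infty)})$ by the Box Lemma.

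The second step is to apply Theorem \ref{thm:WProjInfty} and simplify the resulting Euler product. The proof of Theorem \ref{thm:Drinfeld-Infinite-Stable} computes
\[
\prod_{v\in\Val_0(\F_q(T))}\frac{N(\p_v)^r-N(\p_v)^{s-1}}{N(\p_v)^r}=\zeta_{\F_q(T)}(r-s+1)^{-1}.
\]
At each $\p_i\in S$ we replace the factor $\frac{N(\p_i)^r-N(\p_i)^{s-1}}{N(\p_i)^r}=1-N(\p_i)^{-(r-s+1)}$ by $\kappa_{\mcL_i}$ (the local density from Table \ref{tab:LocalConditions}, whose values arose in the proof of Theorem \ref{thm:Drinfeld-single-local-condition}). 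The correction factor is
\[
\prod_{\p_i\in S}\frac{\kappa_{\mcL_i}}{1-N(\p_i)^{-(r-s+1)}},
\]
and multiplying the asymptotic of Theorem \ref{thm:Drinfeld-Infinite-Stable} by this factor yields the stated formula.

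There is no serious obstacle: the only bookkeeping point is making sure that the tail estimate used to verify (\ref{eq:WProjInfty}) is insensitive to modifying the local conditions at finitely many primes, which is immediate because each finite place contributes a finite measure factor to the density and a strictly lower-order error to the count. The rest is a direct substitution into Theorem \ref{thm:WProjInfty} together with the Euler product computation above.
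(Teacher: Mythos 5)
Your proof is correct and follows the same approach as the paper, which merely remarks that the proof of Theorem \ref{thm:Drinfeld-Infinite-Stable} goes through unchanged after altering the local conditions at the finitely many primes in $S$, and then substitutes the local densities $\kappa_{\mcL_i}$ for the factors $1-N(\p_i)^{-(r-s+1)}$ in the Euler product exactly as you do. One minor imprecision: your justification that (\ref{eq:WProjInfty}) still holds should not rest on the Box Lemma error bound but on the simpler observation that condition (\ref{eq:WProjInfty}) only involves places of degree $>d$, so once $d>\max_{\p_i\in S}\deg(\p_i)$ the modified conditions at $S$ are literally absent from the tail count and the verification via Lemma \ref{lem:WPROJ-Ek-Bha} is unchanged.
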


%\bibliographystyle{alpha}
%\bibliography{bibfile}

\bibliographystyle{plain}

\end{document}